\DeclareMathOperator*{\argmax}{arg\,max}
\newcommand{\ii}{i}
\newcommand{\jj}{j}
\newcommand{\kk}{k}
\newcommand{\nn}{n}
\newtheorem{remark}{theorem}
\title{On the Lebesgue Constant of Weighted Leja Points for Lagrange Interpolation on Unbounded Domains}
\author{%
Peter Jantsch\thanks{Department of Mathematics, University of Tennessee, Knoxville, TN 37996 ({\tt pjantsch@math.utk.edu})}%
\and C.~G.~Webster\footnotemark[1]~\thanks{Department of Computational and Applied Mathematics, Oak Ridge National Laboratory, 
Oak Ridge, TN 37831.}
\and G.~Zhang\footnotemark[2]
}
\begin{document}


\maketitle

\begin{abstract}
{This work focuses on weighted Lagrange interpolation on an unbounded domain, and analyzes the Lebesgue constant for a sequence of weighted Leja points.
The standard Leja points are a nested sequence of points defined on a compact subset of the real line, and can be extended to 
unbounded domains with the introduction of a weight function $w:\mathbb{R}\rightarrow [0,1]$. Due to a simple recursive formulation in one dimension, such abscissas provide a foundation for high-dimensional approximation methods such as sparse grid collocation, deterministic least squares, and compressed sensing.
Just as in the unweighted case of interpolation on a compact domain, we use results from potential theory to prove that the Lebesgue constant for the Leja points grows subexponentially with the number of interpolation nodes.}
\end{abstract}
\begin{keywords}
Weighted Leja sequence, Lagrange interpolation, Lebesgue constant 
\end{keywords}

\section{Introduction}\label{Intro}
The Lebesgue constant for a countable set of nodes provides a measure of how well the interpolant of a function at the given points compares to best polynomial approximation of the function. We are especially interested in how this constant grows with the number of interpolation nodes, i.e., the corresponding degree of the interpolating polynomial, in an unbounded domain.
As such, in this work we analyze the Lebesgue constant for a sequence of weighted Leja points on the real axis. Leveraging results from weighted potential theory~\cite{saff1997}, and orthogonal polynomials with exponential weights~\cite{levin2001}, we show that the Lebesgue constant for the weighted Leja points grows subexponentially with the number of interpolation nodes. 

The standard Leja sequence on $[-1,1] \subset \mathbb{R}$ is defined recursively: given a point $x_0$, for $\nn=1,2,\ldots,$ define the next Leja point as
\begin{equation}\label{leja}
	x_{\nn} = \argmax_{x\in[-1,1]} \prod_{\jj=0}^{\nn-1} \left| x - x_\jj \right| .
\end{equation}
There is still some ambiguity in this definition, since the maximum may be attained at several points. For the purposes of this work, we may choose any maximizer $x_{\nn}$ without affecting the analysis. In addition, by introducing a weight function $w:\mathbb{R} \rightarrow [0,1]$, we may also define the Leja sequence for weighted interpolation on the real line. Given a point $x_0$, for $n\geq1$ we recursively define:
\begin{equation}\label{wleja}
	x_{\nn} = \argmax_{x\in\mathbb{R}} \left( w(x) \prod_{\jj=0}^{\nn-1} |x - x_\jj| \right).
\end{equation}
As above, any maximizer is suitable, so we are not worried about the ambiguity in this definition.

The works~\cite{garcia2010b, narayan2014} show that a contracted version of the weighted Leja sequence~\eqref{wleja} is asymptotically Fekete. Specifically, this means that we first multiply the weighted Leja sequence by a contraction factor, i.e., 	
	\begin{equation}\label{contracted}
		x_{\nn,\jj} := {\nn}^{-1/\alpha} x_\jj, \quad \jj=0,\ldots,\nn,
	\end{equation}
for some appropriate real number $\alpha = \alpha(w)>1$, depending on the weight $w$. The discrete point-mass measures $\mu_{\nn}$ giving weight $1/(\nn+1)$ to each of the first $\nn+1$ \emph{contracted} Leja points, i.e.,
	\begin{equation}\label{leja_meas}
		\mu_{\nn} := \frac1{\nn+1} \sum_{\jj=0}^\nn \delta_{\{ x_{\nn,\jj} \}},
	\end{equation}
converge weak$^*$, as $n\rightarrow\infty$, to an equilibrium measure on a compact subset of $\mathbb{R}$. In other words, the Leja points asymptotically distribute similar to Fekete points, which are known to be a ``good'' set of points for interpolation (see \S\ref{ssec:Fekete}). In fact, the asymptotically Fekete property is a necessary, but not sufficient, property for a set of points to have a subexponentially growing Lebesgue constant, and motivates our study of the weighted Leja sequence for Lagrange interpolation.

The rest of the paper is organized as follows. In~\S\ref{prob}, we introduce the concept of weighted Lagrange interpolation of a function on the real line, and in Theorem~\ref{thm:main} state our main result that describes the growth of the Lebesgue constant for weighted Leja points. To prove our new theorem, we use results from potential theory, which we introduce in~\S\ref{sec:wpotential}. Specifically, we exploit the relationship between discrete potentials and polynomials with zeros at the Leja points, and the fact that the measures $\mu_{\nn}$ converge weak$^*$ to the appropriate equilibrium measure of the Fekete points. While potential theory gives us almost the whole result, we also require some explicit estimates on the spacing of the weighted Leja points, which are given in~\S\ref{sec:space}. The completion of the proof of our main theorem describing the growth of the Lebesgue constant for weighted Leja points is given in~\S\ref{sec:proof}, followed by concluding remarks.

\section{Lagrange Interpolation and Leja Points}\label{prob}
In this section we introduce the problem of weighted Lagrange interpolation of a function on the real line. We also discuss the Lebesgue constant for a set of interpolation points, and show how it relates to the best approximation error. Finally, in~\S\ref{ssec:statement} we describe our main contribution, which involves a theoretical estimate of the growth of the Lebesgue constant of the weighted Leja sequence versus of the number of interpolation points. More specifically, in Theorem~\ref{thm:main} we prove that the Lebesgue constant of the weighted Leja points grows subexponentially.

To make the setting precise, assume we are given a continuous function $f$ on $\mathbb{R}$ that we would like to interpolate. In other words, we have a set of $\nn+1$ points, $\{ x_\kk \}_{\kk=0}^{\nn} \subset \mathbb{R}$, and the values $\{ f(x_\kk) \}_{\kk=0}^{\nn}$ at each of those points. Lagrange interpolation constructs a polynomial $\mathcal{I}_\nn[f]$, of degree $\nn$, that matches $f$ at every interpolation point, i.e.,
\begin{equation*}
	\mathcal{I}_\nn[f](x_\kk) = f(x_\kk), \quad\kk=0,\dots,\nn.
\end{equation*}
The fundamental Lagrange basis functions for $\{ x_\kk \}_{\kk=0}^{\nn}$ are defined as: 
\begin{equation}
	l_{\nn,\kk} (x) = \prod_{\substack{\jj=0 \\ \jj\neq\kk}}^{\nn} \frac{ (x-x_\jj)}{(x_\kk - x_\jj)}, \qquad \kk = 0,\ldots,\nn.
\end{equation}
These functions satisfy $l_{\nn,\kk}(x_\jj) = \delta_{\jj,\kk}$ for all $\jj,\kk=0,\ldots,\nn$. The unique Lagrange interpolant of degree $\nn$ for $f$ is then given by
\begin{equation}
	\mathcal{I}_\nn[f] (x) = \sum_{\kk=0}^{\nn} f(x_\kk) l_{\nn,\kk}(x).
\end{equation}

Given an appropriate weight function $w:\mathbb{R}\rightarrow[0,1]$, to estimate the $w$-weighted approximation error for this interpolation scheme, we define $\mathbb{P}_{\nn} = \textrm{span}\{x^\jj\}_{\jj=0}^{\nn}$ to be the space of polynomials of degree at most $\nn$ over $\mathbb{R}$, and let $p_{\nn}$ be an arbitrary element of $\mathbb{P}_{\nn}$. Then the error in the norm of $L^{\infty}(\mathbb{R})$, with $\|\cdot\|_{\infty} := \|\cdot\|_{L^\infty(\mathbb{R})}$, is given by
\begin{align}
	\| w \left( f - \mathcal{I}_\nn[f] \right) \|_{\infty} &\leq \| w\left(f - p_{\nn}\right) \|_{\infty}  + \|w\, \mathcal{I}_\nn[p_{\nn}-f] \|_{\infty}  \leq \|w\left(f - p_{\nn}\right)\|_{\infty}  \left( 1 + \mathbb{L}_\nn  \right),\label{interp_p}
\end{align}
where the quantity
\begin{equation}
	\mathbb{L}_\nn := \sup_{x\in\mathbb{R}}\left(  \sum_{\kk=0}^{\nn} \frac{ w(x) | l_{\nn,\kk}(x) |}{w(x_\kk)} \right)
\end{equation}
is called the Lebesgue constant. In contrast to the case of unweighted Lagrange interpolation on a bounded domain, here the Lebesgue constant explicitly involves the weight function $w$.

In the inequality~\eqref{interp_p}, we may take the infimum over all $p_{\nn}\in\mathbb{P}_{\nn}$, to see that the Lebesgue constant relates the error in interpolation to the best approximation error by a polynomial in $\mathbb{P}_{\nn}$:
\begin{equation}
	\| w \left( f - \mathcal{I}_\nn[f] \right) \|_{\infty} \leq \left( 1 + \mathbb{L}_\nn \right) \inf_{p_{\nn}\in\mathbb{P}_{\nn}} \|w \left(f - p_{\nn}\right)\|_{\infty}.
\end{equation}
Thus, we see that the problem of constructing a stable and accurate Lagrange interpolant consists in the construction of a set of interpolation points for which $\mathbb{L}_\nn$ does not grow too quickly. 

\subsection{Our contribution}\label{ssec:statement}
In this work we prove the following result:

\begin{theorem}\label{thm:main}
Let $\alpha>1$ and assume $w:\mathbb{R}\rightarrow[0,1]$ is a weight function of the following form
\begin{equation}\label{w}
	w(x) = \exp( - Q(x) ), \quad\text{with}\quad Q(x) = |x|^{\alpha}, \quad x\in \mathbb{R}.
\end{equation}
	Then the Lebesgue constant for the weighted Leja sequence~\eqref{wleja}, defined on $\mathbb{R}$,  grows subexponentially with respect to the number of interpolation points $n$ , i.e.,
	\begin{equation*}
		\lim_{n\rightarrow\infty} \left( \mathbb{L}_\nn \right)^{\frac{1}{n}} = \lim_{n\rightarrow\infty}\left\{ \sup_{x\in\mathbb{R}} \left(\sum_{\kk=0}^{\nn} \left| \frac{ w(x) \prod_{{\jj=0,\; \jj\neq \kk}}^{\nn} (x - x_\jj) }{ w(x_\kk) \prod_{{\jj=0, \jj\neq \kk}}^{\nn} (x_\kk - x_\jj) }\right| \right) \right\}^{\frac{1}{\nn}} = 1.
	\end{equation*}
\end{theorem}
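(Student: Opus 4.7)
The plan is to bound each summand in the Lebesgue function individually and show
$$\frac{w(x)\,|l_{n,k}(x)|}{w(x_k)} \leq e^{o(n)}$$
uniformly in $x \in \mathbb{R}$ and $k \in \{0,\dots,n\}$, so that summing $n+1$ such terms and taking the $n$-th root yields the desired limit $1$. The matching lower bound $\liminf_{n\to\infty} (\mathbb{L}_n)^{1/n} \geq 1$ is immediate, since $\mathbb{L}_n \geq 1$ for any interpolatory scheme (evaluate the Lebesgue function at any node).

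For the numerator $w(x)\prod_{j \neq k}|x - x_j|$, I would pass to the contracted variables $y = n^{-1/\alpha}x$ and $y_j = n^{-1/\alpha}x_j$, under which $w(x)^n \prod_j (x - x_j)$ becomes, up to an explicit power of $n$, a weighted polynomial in $y$ with external field $Q(y) = |y|^\alpha$ on a bounded region. Taking logarithms and dividing by $n$, the resulting quantity is exactly (minus) the weighted logarithmic potential of the empirical measure $\mu_n$ defined in~\eqref{leja_meas}. Since $\mu_n$ converges weak$^*$ to the weighted equilibrium measure $\mu_w$ on a compact set, and since Frostman's theorem in the weighted setting~\cite{saff1997} says that the weighted potential of $\mu_w$ equals the modified Robin constant $F_w$ on the support and does not exceed it elsewhere on $\mathbb{R}$, the $1/n$-th root of the numerator is controlled uniformly by $e^{F_w + o(1)}$.

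For the denominator $|\prod_{j\neq k}(x_k - x_j)|$, the Leja defining property~\eqref{wleja} already handles the factors with $j < k$: by construction, $w(x_k)\prod_{j<k}|x_k - x_j|$ is the maximum over $x \in \mathbb{R}$ of $w(x)\prod_{j<k}|x - x_j|$, so a lower bound follows from the same potential-theoretic argument applied at step $k$. The remaining factors with $j > k$ cannot be extracted from the defining property and instead require explicit quantitative spacing estimates on the weighted Leja nodes, as developed in~\S\ref{sec:space}. The key inputs are that consecutive Leja points, even after contraction by $n^{-1/\alpha}$ as in~\eqref{contracted}, do not cluster fast enough for $|\prod_{j>k}(x_k - x_j)|^{1/n}$ to decay faster than subexponentially, together with the fact that Leja points remain confined to a window of size $O(n^{1/\alpha})$ because $Q$ grows like $|x|^\alpha$ (this is precisely what makes the contraction exponent in~\eqref{contracted} the right one).

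Combining the numerator upper bound and the denominator lower bound yields the per-summand estimate, and the theorem follows after summing $n+1$ terms and extracting $n$-th roots. The main obstacle is the control of the $j > k$ factors in the denominator: the potential-theoretic half of the argument is essentially a direct application of the weak$^*$ convergence of $\mu_n$ to $\mu_w$ already recalled in the introduction, whereas the spacing analysis is non-asymptotic and must genuinely exploit the recursive structure of~\eqref{wleja}; it is here that the orthogonal-polynomial estimates for exponential weights from~\cite{levin2001} will play the decisive role.
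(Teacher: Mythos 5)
Your overall framework (numerator upper bound of the form $e^{(-F_w+o(1))n}$ via weak$^*$ convergence of the empirical measures and the Frostman identity $U^{\mu_w}+Q=F_w$, matching denominator lower bound, trivial $\liminf \mathbb{L}_n^{1/n}\ge 1$) is the right one, and your treatment of the numerator matches the paper's Lemma~\ref{lem:numer} in spirit (modulo a sign: the limit is $e^{-F_w}$, not $e^{F_w}$). The gap is in the denominator. You split $\prod_{j\neq k}|x_k-x_j|$ by \emph{index} into $j<k$ (handled by the Leja maximality property) and $j>k$ (handled by spacing). This cannot close the argument. First, the maximality property controls only $k$ factors, so when $k=o(n)$ it contributes nothing at the scale $e^{-nF_w}$ and essentially all $n$ factors fall to the spacing estimate. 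Second, the spacing estimate $|x_{n,i}-x_{n,j}|\ge c/n$ (in contracted variables), applied to $m$ factors ordered by distance from $x_{n,k}$, yields a lower bound of the form $\bigl(m!/(Cn)^m\bigr)^{1/n}\approx (m/(eCn))^{m/n}$ by Stirling; when $m\sim n$ this is a \emph{fixed constant} $c_0<1$, not $1-o(1)$. You would then only obtain $\limsup_n \mathbb{L}_n^{1/n}\le 1/c_0>1$, which does not prove the theorem. (Nor is it true that $\bigl(\prod_{j>k}|x_{n,k}-x_{n,j}|\bigr)^{1/n}\to 1$; that product genuinely decays exponentially, at a rate given by a partial logarithmic potential of $\mu_w$.)

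The decomposition that works, and the one the paper uses, is by \emph{distance} rather than by index: fix $\delta>0$ and split the denominator into the factors with $|x_{n,k}-x_{n,j}|\ge\delta$ (both $j<k$ and $j>k$), whose $n$-th root converges to $e^{-F_w}$ by weak$^*$ convergence of $\mu_{n,k}$ to $\mu_w$ together with $U^{\mu_w}(x_{n,k})+Q(x_{n,k})=F_w$ (Lemma~\ref{lem:P1}), and the factors with $|x_{n,k}-x_{n,j}|<\delta$. Only the latter use the spacing theorem, and the point you are missing is that their number $m$ satisfies $m/n\le\tau(\delta)$ with $\tau(\delta)\to 0$ as $\delta\to 0$, uniformly in $k$, because $\mu_{n,k}(B(x,\delta))\to\mu_w(B(x,\delta))$ and $\mu_w$ assigns uniformly small mass to small balls. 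Then the Stirling bound becomes $(\tau/C)^{2\tau}\to 1$ as $\tau\to 0$, which is exactly what is needed. So the spacing estimate and the potential theory are not assigned to the $j>k$ and $j<k$ halves of the product respectively; both index ranges need potential theory for their far factors, and spacing is only ever applied to an asymptotically vanishing fraction of the factors. As written, your proposal does not prove the denominator bound.
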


The rest of this paper is devoted to the proof of Theorem~\ref{thm:main}. Similar to the case of unweighted Leja points~\cite{taylor2008,taylor2010}, in~\S\ref{sec:wpotential}, we explore the connection between polynomials and weighted potentials, and show how classical weighted potential theory can be used to understand the asymptotic behavior (with respect to $n$) of an $n^{th}$ degree polynomial with roots at the contracted Leja points. While these techniques give us most of the result, the final part of the proof requires an explicit estimate on the spacing of the weighted Leja nodes, which is developed in~\S\ref{sec:space}. Finally, in~\S\ref{sec:proof}, we combine the spacing result and weighted potential theory to complete the proof of Theorem~\ref{thm:main}.

\section{Weighted Potential Theory}\label{sec:wpotential}

In this section, we state some necesary definitions and results from weighted potential theory, which will be the main tools we use to prove Theorem~\ref{thm:main}. For more details, we refer the interested reader to~\cite{saff1997}. 
The class of weights used in this paper, defined in~\eqref{w}, are a subset of the well-studied \emph{Freud weights}~\cite{levin2001}. From~\eqref{w}, note first that we may extend $Q$ to be a function on $\mathbb{C}$, and that $w$ has the following properties:
	\begin{enumerate}
		\item The extended weight function $w:\mathbb{C}\rightarrow[0,1]$ is continuous in $\mathbb{C}$.
		\item The set $\Sigma_0 := \{ x\in \mathbb{R} \,\big|\, w(x) > 0 \}$ has positive capacity, i.e.,
		\begin{equation*}
			\textrm{cap}(\Sigma_0) = \sup\{ \textrm{cap}(K) : K \subseteq \Sigma_0, K \textrm{ compact} \} > 0,
		\end{equation*}
		where
		\begin{equation*}
			\textrm{cap}(K) = \exp \left\{ \inf\left( \int_K \int_K {\log |x-t|} \,d\mu(x) d\mu(t): \mu \in\mathcal{M}(K) \right) \right\}.
		\end{equation*}
		\item The limit $|x| w(x) \rightarrow 0$ as $|x|\rightarrow\infty$, $x\in \mathbb{R}$. 
	\end{enumerate}
In the language of weighted potential theory, these properties imply that $w$ is \emph{admissible}.

Furthermore, we also define the Mhaskar-Rhamanov-Saff number $a_\nn = a_\nn(w)$, as the unique solution to the equation (see ~\cite[Corollary IV.1.13]{saff1997}):
\begin{equation}\label{andef}
	n = \frac1{\pi} \int_{-a_\nn}^{a_\nn}{ \frac{x\, Q'(x)}{\sqrt{a_\nn^2 - x^2}}\,dx}.
\end{equation}
 This number $a_\nn$ has a few special properties which we use in the following analysis. First, the weighted sup-norm of an $n^{th}$ degree polynomial on $\mathbb{R}$ is realized on the compact set $[-a_\nn,a_\nn]$, i.e., for all $p_\nn\in\mathbb{P}_\nn$,
\begin{equation}\label{restrictedrange}
	\|p_\nn w\|_{\infty} = \sup_{|x|\leq a_\nn} |p_\nn(x)| w(x),
\end{equation}
and $|p_\nn(x)| w(x)<\|p_\nn w\|_{\infty} $ for $|x|>a_\nn$~\cite{saff1997}. Second, from~\cite[p. 27]{levin2001}, $a_\nn\rightarrow\infty$ at approximately the rate $\nn^{1/\alpha}$, i.e.,
\begin{equation}\label{a_\nn}
	a_\nn \sim \nn^{1/\alpha}.
\end{equation}
Here, and in what follows, for two sequences $a_\nn,b_\nn$, we write $a_\nn \sim b_\nn$ if and only if there exist constants $C_1,C_2>0$, independent of $n$, such that $C_1 \leq {a_\nn}/{b_\nn} \leq C_2 $.

Let $\mathcal{M}(\mathbb{R})$ be the collection of all positive unit Borel measures $\mu$ with $\textrm{Supp}(\mu) \subseteq \mathbb{R}$. For $\mu\in\mathcal{M}(\mathbb{R})$ and $x,t\in\mathbb{R}$, define the weighted energy integral
\begin{align*}
	I_w(\mu) &= \int \int \log\left( \left| x - t \right| w(x) w(t) \right)^{-1} \,d\mu(x) d\mu(t) \\
		&= \int \int \log\frac1{|x-t|} \,d\mu(x) d\mu(t) + 2 \int Q\, d\mu.
\end{align*}
We also define the logarithmic potential by
\begin{equation}
	U^{\mu} (x) := \int \log\frac1{|x-t|} \,d\mu(t).
\end{equation}
 The goal of weighted potential theory is to find and analyze the measure $\mu\in\mathcal{M}(\mathbb{R})$ that minimizes the weighted energy integral $I_w(\mu)$. The following theorem may be found in general form in~\cite[Theorem I.1.3]{saff1997}, and is presented here for the specific case~\eqref{w} of a continuous, admissible weight $w$ on $\mathbb{R}$. 
\begin{theorem}\label{thm:potential}
Let $w$ be a continuous, admissible weight function on $\mathbb{R}\subset\mathbb{C}$, and define
\begin{equation}\label{Vw}
	V_w := \inf\left\{ I_w(\mu) \,\big|\, \mu\in\mathcal{M}(\mathbb{R})\right\}.
\end{equation}
Then we have the following properties:
\begin{itemize}
	\item The quantity $V_w$ is finite.
	\item There exists a unique measure $\mu_w \in \mathcal{M}(\mathbb{R})$ such that
		\begin{equation*}
			I_w(\mu_w) = V_w,
		\end{equation*}
		and the equilibrium measure $\mu_w$ has finite logarithmic energy, i.e.,
		\begin{equation*}
			-\infty < \int\int \log\frac1{|x-t|} \,d\mu_w(t)d\mu_w(x)=\int U^{\mu_w}(x) \,d\mu_w(x) < \infty.
		\end{equation*}
	\item Let $F_w$ be the modified Robin constant for $w$, given by
	\begin{equation}\label{robin}
		F_w := V_w - \int Q\, d\mu_w.
	\end{equation}		
	The logarithmic potential $U^{\mu_w}$ is continuous for $z\in\mathbb{C}$ and, moreover, for every $x\in \textrm{Supp}(\mu_w)\subset\mathbb{R}$,
		\begin{equation} \label{UQF}
			U^{\mu_w}(x) + Q(x) = F_w.
		\end{equation}

\end{itemize}
\end{theorem}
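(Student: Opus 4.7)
The plan is to follow the direct method of the calculus of variations in the space $\mathcal{M}(\mathbb{R})$ of unit-mass Borel measures, exploiting the admissibility of $w$ to ensure both coercivity of the energy functional $I_w$ and tightness of minimizing sequences. The first task is to show $V_w\in\mathbb{R}$. For the upper bound I exhibit a competitor: since $\Sigma_0$ has positive capacity, pick a compact $K\subset\Sigma_0$ with $\textrm{cap}(K)>0$ and take its classical (unweighted) equilibrium measure $\nu_K$ supported on $K$; its logarithmic energy is finite, and continuity of $Q$ on the compact set $K$ makes $\int Q\,d\nu_K$ finite, so $I_w(\nu_K)<\infty$. For the lower bound I use the admissibility condition $|x|w(x)\to 0$, which implies $Q(x)\gtrsim \log|x|$ as $|x|\to\infty$; combined with the elementary bound $\log\frac{1}{|x-t|}\geq -\log(1+|x|)-\log(1+|t|)$, this gives $I_w(\mu)\geq -C$ uniformly in $\mu\in\mathcal{M}(\mathbb{R})$.

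Next, for existence, take a minimizing sequence $\{\mu_n\}$. The uniform bound on $\int Q\,d\mu_n$, inherited from $I_w(\mu_n)\leq V_w+1$ together with the growth of $Q$, yields tightness, so Prokhorov's theorem produces a weak$^*$ limit $\mu_w\in\mathcal{M}(\mathbb{R})$. Lower semicontinuity of the pure log-energy under weak$^*$ convergence, obtained by truncating $\log(1/|x-t|)$ from above by bounded continuous functions and applying monotone convergence, together with lower semicontinuity of $\mu\mapsto\int Q\,d\mu$ by the same truncation device on $Q$, yields $I_w(\mu_w)\leq V_w$, so $\mu_w$ is a minimizer. Uniqueness then follows from strict convexity of the pure log-energy on the affine slice of unit-mass measures: any finite signed measure $\sigma$ with $\sigma(\mathbb{R})=0$, compact support, and finite log-energy satisfies $\iint\log(1/|x-t|)\,d\sigma(x)\,d\sigma(t)\geq 0$ with equality only if $\sigma\equiv 0$, which may be proved via the Fourier representation of $-\log|x-t|$; since $\int Q\,d\mu$ is linear in $\mu$, this transfers to strict convexity of $I_w$.

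The Euler-Lagrange conditions come from perturbing $\mu_w$ by $\mu_w + s(\nu-\mu_w)$ for test measures $\nu\in\mathcal{M}(\mathbb{R})$ of finite weighted energy and letting $s\downarrow 0$, producing the Frostman-type pair $U^{\mu_w}+Q\geq F_w$ quasi-everywhere on $\mathbb{R}$ and $U^{\mu_w}+Q\leq F_w$ on $\textrm{Supp}(\mu_w)$; since $\mu_w$ charges no polar set, equality~\eqref{UQF} holds pointwise on $\textrm{Supp}(\mu_w)$, and integrating against $\mu_w$ gives $V_w-\int Q\,d\mu_w=F_w$, confirming the Robin constant formula~\eqref{robin} and forcing $\int U^{\mu_w}\,d\mu_w=F_w-\int Q\,d\mu_w$ to be finite. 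The main obstacle is the continuity of $U^{\mu_w}$ on all of $\mathbb{C}$: off $\textrm{Supp}(\mu_w)$ the potential is harmonic and hence continuous, and the identity $U^{\mu_w}=F_w-Q$ forces continuity on the support, but pasting these across $\partial\,\textrm{Supp}(\mu_w)$ demands a principle-of-continuity argument. Since $U^{\mu_w}$ is automatically upper semicontinuous as the convolution of $-\log$ with a positive measure, and continuity of $Q$ combined with the variational inequality $U^{\mu_w}\geq F_w-Q$ upgrades lower semicontinuity on the support, gluing with the harmonic extension off the support delivers the required global continuity.
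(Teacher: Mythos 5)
Your sketch of the first two bullets (finiteness of $V_w$, existence and uniqueness of $\mu_w$, finite energy) is a reasonable outline of the standard direct-method argument; the paper does not reprove these but imports them wholesale from \cite[Theorem I.1.3]{saff1997}, so no comparison is needed there. The genuine gap is in the third bullet, which is the only part the paper actually argues. Your Frostman step yields $U^{\mu_w}+Q\ge F_w$ quasi-everywhere and $U^{\mu_w}+Q\le F_w$ on $\textrm{Supp}(\mu_w)$, and you then conclude that equality \eqref{UQF} holds ``pointwise on $\textrm{Supp}(\mu_w)$'' because $\mu_w$ charges no polar set. That inference is a non sequitur: not charging polar sets upgrades the quasi-everywhere inequality to a $\mu_w$-almost-everywhere one, not to an everywhere one, since a polar exceptional set can perfectly well meet the support. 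Passing from ``q.e.\ on the support'' to ``at every point of the support,'' simultaneously with the continuity of $U^{\mu_w}$, is precisely where the paper invests its effort: it observes that every point of $\textrm{Supp}(\mu_w)\subset\mathbb{R}$ is a boundary point of both components of $\mathbb{C}\setminus\mathbb{R}$, invokes \cite[Theorem I.5.1]{saff1997} to obtain continuity of $U^{\mu_w}$ on the support together with the everywhere equality, and then applies the continuity principle \cite[Theorem I.4.4]{saff1997} to extend continuity to all of $\mathbb{C}$. Some geometric input of this kind is unavoidable; without it the everywhere statement can fail at irregular points of the support.

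Your continuity argument also has the semicontinuity running in the wrong direction and is circular. With the convention used here, $U^{\mu_w}(x)=\int\log\frac{1}{|x-t|}\,d\mu_w(t)$ is the potential of a positive measure with a kernel that equals $+\infty$ on the diagonal; by Fatou it is \emph{lower} semicontinuous (it is superharmonic), not upper semicontinuous as you assert. Consequently the proposed pairing of ``automatic usc'' with the lower bound $U^{\mu_w}\ge F_w-Q$ cannot deliver continuity, and that lower bound is in any case only known quasi-everywhere at this stage. Moreover, you use the identity $U^{\mu_w}=F_w-Q$ on the support to derive continuity there, while that identity at every point is exactly what remains to be proven. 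Finally, ``gluing with the harmonic extension off the support'' silently assumes the continuity principle --- that a potential whose restriction to $\textrm{Supp}(\mu_w)$ is continuous is automatically continuous on $\mathbb{C}$ --- which is a nontrivial theorem in its own right and is what \cite[Theorem I.4.4]{saff1997} supplies. Either cite these two results as the paper does, or reproduce the boundary-point/barrier arguments behind them in full.
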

\begin{proof}
	The first two statements are quoted directly from, and proved in,~\cite[Theorem I.1.3]{saff1997}. To prove the third statement, we note that $\mathbb{C}\setminus\mathbb{R}$ has exactly two connected components, namely $\{ \text{Im}(z)>0 \}$ and $\{ \text{Im}(z)<0 \}$, and that of course every point in $\textrm{Supp}(\mu_w) \subset \{ \text{Im}(z)=0 \}$ is a boundary point for both of these sets. Thus, by~\cite[Theorem I.5.1]{saff1997}, $U^{\mu_w}$ is continuous on $\textrm{Supp}(\mu_w)$. Hence, from~\cite[Theorem I.4.4]{saff1997},  $U^{\mu_w}$ is continuous on all of $\mathbb{C}$, and~\eqref{UQF} holds for every $x\in \textrm{Supp}(\mu_w) \subset\mathbb{R}$. 
\end{proof}

\subsection{Weighted Fekete Points}\label{ssec:Fekete}
In this section we describe the connection between Leja points and the weighted equilibrium measure $\mu_w$. For $\nn\geq0$, let $\mathcal{T}_\nn$ denote a general set of points in $\mathbb{R}$ with cardinality $|\mathcal{T}_\nn|=\nn+1$, and let $w$ be an admissible weight on $\mathbb{R}$. We say a set of $\nn+1$ points is (weighted-)Fekete if it maximizes the quantity:
	\begin{equation}\label{Fekete}
		\mathcal{F}_\nn = \argmax_{|\mathcal{T}_\nn|=\nn+1}\Bigg( \prod_{{t,s \in \mathcal{T}_\nn \atop t\neq s}} |t-s|w(t)w(s) \Bigg)^{\frac2{(\nn+1)(\nn+2)}}.
	\end{equation}
It is known that the Lebesgue constant for a set of Fekete points $\mathcal{F}_\nn$ satisfies
	\begin{equation*}
		\mathbb{L}(\mathcal{F}_\nn) := \sup_{x\in \mathbb{R}} \left( \sum_{s\in\mathcal{F}_\nn} \bigg| \frac{ w(x) \prod_{t\neq s} (x - t) }{ w(s) \prod_{t \neq s} (s - t) }\bigg|\right) \leq \nn + 1.
	\end{equation*}
Furthermore, we also know that for a sequence of Fekete point sets, $\{\mathcal{F}_\nn\}_{n\geq1}$,
	\begin{equation*}
		\lim_{n\rightarrow\infty}\Bigg( \prod_{{t,s \in \mathcal{F}_\nn\atop t\neq s}} |t-s|w(t)w(s) \Bigg)^{\frac2{(\nn+1)(\nn+2)}}= \exp(-V_w),
	\end{equation*}
where $V_w$, as  defined in~\eqref{Vw}, is the weighted logarithmic capacity for $\mathbb{R}$ with respect to $w$. For interpolation schemes, we are also interested in arrays of points with similar asymptotic properties to Fekete points in the limit as $n\rightarrow\infty$, since this is a necessary condition for a sequence of points to have a well-behaved Lebesgue constant. Thus, we make the following definition:
\begin{definition}
	A sequence of point sets $\{\mathcal{T}_\nn\}_{n\geq1}$, with $|\mathcal{T}_\nn| = n, \, n\geq1$, is called 
	\\asymptotically (weighted) Fekete if
	\begin{equation*}
		\lim_{n\rightarrow\infty} \Bigg( \prod_{{t,s \in \mathcal{T}_\nn\atop t\neq s}} |t-s|w(t)w(s) \Bigg)^{\frac2{(\nn+1)(\nn+2)}} = \exp(-V_w).
	\end{equation*}
\end{definition}
Note that a sequence of interpolation points may be \emph{asymptotically} Fekete but not Fekete, i.e., without satisfying~\eqref{Fekete} for any $n\in\mathbb{N}$. The following lemma, first proved in~\cite{garcia2010b} in a more general setting than the one considered here, and later in
\cite{narayan2014}, indicates that the contracted Leja sequence distributes asymptotically like the Fekete points.
\begin{lemma}\label{fekete}
	The contracted Leja sequence, defined by~\eqref{wleja} and~\eqref{contracted} is asymptotically Fekete. 
\end{lemma}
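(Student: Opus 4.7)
The plan is to show that the empirical measures $\mu_n$ associated with the contracted Leja points converge weak-$*$ to the weighted equilibrium measure $\mu_w$, from which the asymptotic Fekete identity follows via the variational characterization of $V_w$.

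First I would invoke the restricted range theorem \eqref{restrictedrange} to conclude that any Leja maximizer necessarily satisfies $|x_n|\le a_n$; combined with the growth rate $a_n \sim n^{1/\alpha}$, this places every contracted point $\tilde x_{n,j} = n^{-1/\alpha}x_j$ inside a fixed compact interval $[-K,K]$, so the sequence $\{\mu_n\}$ is tight and every subsequence admits a weak-$*$ convergent sub-subsequence. Next I would rewrite the Leja selection rule on the contracted scale: under the substitution $x = n^{1/\alpha}y$ the weight factors cleanly as $w(x) = \exp(-n|y|^\alpha) = W(y)^n$ with $W(y):=\exp(-|y|^\alpha)$, so, modulo an overall factor of $n^{n/\alpha}$, the contracted Leja rule is exactly the greedy Leja procedure for the $n$-th power external field $W^n$. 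This embeds the problem into the canonical setting of \cite[Ch.~III]{saff1997}, in which the weighted Chebyshev-type constants $t_n^{1/n}$ converge to $\exp(-F_w)$ and the zeros of the extremal polynomials distribute asymptotically like $\mu_w$.

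From this point the asymptotic Fekete identity follows by telescoping. By the Leja rule, the greedy value $w(x_k)\prod_{j<k}|x_k-x_j|$ at step $k$ equals the associated weighted Chebyshev maximum, so the discriminant $\prod_{0\le i<j\le n} |x_i-x_j|\,w(x_i)w(x_j)$ factorizes (after contraction and a bookkeeping step to redistribute the weight powers) into a product over $k$ of these greedy extrema. Taking the $2/((n{+}1)(n{+}2))$-th root, using the Cesaro-type limit $t_k^{1/k}\to\exp(-F_w)$, and applying the Robin identity \eqref{robin} $V_w = F_w + \int Q\,d\mu_w$ (with the $Q$-integral supplied by the weight factors in the discriminant via the weak-$*$ convergence $\mu_n \to \mu_w$) yields the lower bound $\liminf \ge \exp(-V_w)$. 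The matching upper bound is immediate from the extremality \eqref{Fekete} defining Fekete points.

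The main obstacle is reconciling the single power of $w(x_k)$ appearing in the Leja rule \eqref{wleja} with the $n$-fold weight powers that naturally sit inside the Fekete product: the mismatch is absorbed only after contraction by $n^{-1/\alpha}$ and under the uniform control afforded by \eqref{restrictedrange} applied at every intermediate scale $k\le n$, where the \emph{effective} weight is $W^k$ rather than $W^n$. Executing this reconciliation rigorously, with quantitative control on the $O(n^2)$ cancellations required by the Fekete normalization, is the substantive technical content of \cite{garcia2010b, narayan2014}, on which the lemma relies.
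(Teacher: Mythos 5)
The first thing to say is that the paper does not prove this lemma at all: it is quoted from \cite{garcia2010b, narayan2014}, so your closing sentence deferring the "substantive technical content" to those references is, in effect, exactly what the paper does. Judged as a sketch of the cited argument, however, your outline has a genuine logical problem in its opening move. You propose to first establish the weak-$*$ convergence $\mu_n\to\mu_w$ and then deduce the asymptotic Fekete identity from it "via the variational characterization of $V_w$." That implication does not hold and runs opposite to the paper's logic: in the paper (and in the cited works) weak-$*$ convergence of the $\mu_n$ is a \emph{consequence} of asymptotic Feketeness (it is the unnumbered lemma immediately following Lemma~\ref{fekete}), not a route to it. Weak-$*$ convergence controls only the part of the discrete energy at fixed separation; by the principle of descent it yields $\liminf$ of the discrete energies $\ge V_w$, i.e., the \emph{upper} bound $\limsup(\cdots)^{2/((n+1)(n+2))}\le\exp(-V_w)$ on the Fekete product, which you already get for free from the extremality of Fekete points. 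It says nothing about near-diagonal clustering of the nodes, which is precisely what could destroy the matching lower bound and is the hard direction.

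On the telescoping step: the greedy value $w(x_k)\prod_{j<k}|x_k-x_j|$ is the sup-norm of one particular weighted monic polynomial, not "the associated weighted Chebyshev maximum" (it is an upper bound for the relevant Chebyshev-type constant, and one needs the lower bound $\ge\exp(-kF_w)$ from \cite[Theorem I.3.6]{saff1997} separately). Moreover, the weight-power mismatch you flag --- a single power of $w$ in the Leja rule \eqref{wleja} versus the $n$-th power each node carries in the Fekete product \eqref{Fekete} --- is not a bookkeeping afterthought: it is resolved only because $Q(x)=|x|^\alpha$ is homogeneous, so that $w(x_j)=\big(w(n^{-1/\alpha}x_j)\big)^n$ for the contracted nodes, and at intermediate steps $k<n$ the effective external field lives on a different scale ($k^{-1/\alpha}$ rather than $n^{-1/\alpha}$). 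Controlling the resulting discrepancy across the $O(n^2)$ factors is the actual content of \cite{garcia2010b, narayan2014}. In short: as a citation your proposal matches the paper; as a standalone proof, the first step is backwards and the core of the second is left unexecuted.
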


Next we define the discrete point-mass measure associated with the points $\mathcal{T}_\nn$ as
\begin{equation*}
	\nu_{\mathcal{T}_\nn} = \frac1{\nn+1} \sum_{t\in\mathcal{T}_\nn} \delta_{\{t\}},
\end{equation*}
where $\delta_{\{t\}}$ is the standard Dirac delta function for the point $t \in\mathcal{T}_\nn$. If a sequence of measures $\{\nu_{\mathcal{T}_\nn}\}_{n\geq0}$ corresponds to an asymptotically Fekete sequence of interpolation nodes, the next lemma tells us that they converge to a particular measure; see~\cite[Theorem 2.3]{garcia2010b}, and~\cite[Theorem 3.1]{narayan2014}.
\begin{lemma}
	Let $\mu_w$ be the equilibrium measure for $\mathbb{R}$ with respect to $w$ (see Theorem~\ref{thm:potential}), and let $\{\mathcal{T}_\nn\}_{n\geq 0}$ be an asymptotically Fekete sequence of point sets with corresponding discrete measures $\{\nu_{\mathcal{T}_\nn}\}_{n\geq0}$. Then we have
	\begin{equation*}
		\lim_{n\rightarrow\infty} \nu_{\mathcal{T}_\nn} = \lim_{n\rightarrow\infty} \frac1{\nn+1} \sum_{t\in\mathcal{T}_\nn} \delta_{\{t\}} = \mu_w,
	\end{equation*}
	where equality is understood in the weak$^*$ sense. In particular, for the measures $\mu_\nn$, defined by~\eqref{leja_meas}, corresponding to the contracted Leja sequence,
	\begin{equation*}
		\lim_{\nn\rightarrow\infty} \mu_\nn = \mu_w.
	\end{equation*}
\end{lemma}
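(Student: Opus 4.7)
The plan is to argue by compactness: I will show that every weak$^*$ cluster point of the sequence $\{\nu_{\mathcal{T}_\nn}\}$ must coincide with the unique equilibrium measure $\mu_w$, so the full sequence converges weak$^*$ to $\mu_w$. The specialization to the contracted Leja sequence then follows immediately from Lemma~\ref{fekete}.

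First I would establish tightness of $\{\nu_{\mathcal{T}_\nn}\}_{n\geq 0}$. Since $w$ is admissible, the restricted range property~\eqref{restrictedrange}, together with the asymptotic $a_\nn \sim \nn^{1/\alpha}$ in~\eqref{a_\nn} and the contraction~\eqref{contracted} in the Leja case, forces the supports of $\nu_{\mathcal{T}_\nn}$ to lie, asymptotically, in a common compact subset of $\mathbb{R}$. Prokhorov's theorem then supplies a weak$^*$ convergent subsequence $\nu_{\mathcal{T}_{\nn_k}} \to \mu^* \in \mathcal{M}(\mathbb{R})$.

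The heart of the argument is to verify $I_w(\mu^*) \leq V_w$. Taking logarithms in the asymptotically Fekete hypothesis yields
\begin{equation*}
\frac{2}{(\nn+1)(\nn+2)} \sum_{\substack{t,s\in\mathcal{T}_\nn \\ t\neq s}} \log\frac{1}{|t-s|\, w(t)w(s)} \longrightarrow V_w.
\end{equation*}
The main obstacle is that the kernel $k(x,t) := \log\frac{1}{|x-t|w(x)w(t)}$ is lower semicontinuous and unbounded above on the diagonal, so one cannot push weak$^*$ convergence through $k$ directly. The standard remedy, going back to~\cite{saff1997}, is truncation: for each $M>0$, introduce the bounded lower semicontinuous kernel $k_M(x,t) := \min\{k(x,t), M\}$. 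Decomposing the integral against $\nu_{\mathcal{T}_\nn}\otimes\nu_{\mathcal{T}_\nn}$ into its diagonal part (which is $O(M/\nn)$ and hence vanishes) plus its off-diagonal part (which is dominated by the display above and is therefore $\leq V_w + o(1)$), and invoking weak$^*$ lower semicontinuity for bounded lsc kernels, gives
\begin{equation*}
\int\!\!\int k_M(x,t)\, d\mu^*(x)\, d\mu^*(t) \;\leq\; V_w.
\end{equation*}
Letting $M\to\infty$ and applying monotone convergence then delivers $I_w(\mu^*) \leq V_w$.

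Since $\mu^* \in \mathcal{M}(\mathbb{R})$ and $V_w = \inf_{\mu\in\mathcal{M}(\mathbb{R})} I_w(\mu)$, the uniqueness clause of Theorem~\ref{thm:potential} forces $\mu^* = \mu_w$. Because every subsequence of $\{\nu_{\mathcal{T}_\nn}\}$ admits a further subsequence with this same limit, the entire sequence converges weak$^*$ to $\mu_w$. The statement for the contracted Leja sequence $\{\mu_\nn\}$ is then an immediate consequence, since that sequence is asymptotically Fekete by Lemma~\ref{fekete}.
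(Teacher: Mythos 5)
The paper does not actually prove this lemma; it simply cites \cite[Theorem 2.3]{garcia2010b} and \cite[Theorem 3.1]{narayan2014}, so you are supplying an argument where the authors supply a reference. What you have written is a faithful reconstruction of the standard potential-theoretic proof (essentially \cite[Theorem III.1.3]{saff1997} adapted to asymptotically Fekete arrays): extract a weak$^*$ cluster point, truncate the kernel $k(x,t)=\log\frac{1}{|x-t|w(x)w(t)}$ at level $M$, split off the diagonal (mass $O(1/n)$, contribution $O(M/n)$), bound the off-diagonal part by the asymptotically Fekete hypothesis since $k_M\leq k$, pass to the limit by lower semicontinuity, let $M\to\infty$ by monotone convergence, and conclude $I_w(\mu^*)\leq V_w$, whence $\mu^*=\mu_w$ by uniqueness. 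That skeleton is correct and is the right proof.

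The one step that does not hold up as written is tightness. The restricted-range inequality \eqref{restrictedrange} is a statement about weighted \emph{polynomials}; it says nothing about where the points of an arbitrary asymptotically Fekete array $\mathcal{T}_\nn$ may sit, since those points need not be zeros of any extremal polynomial, and the contraction \eqref{contracted} is specific to the Leja construction rather than to general $\mathcal{T}_\nn$. The standard repair stays entirely inside the machinery you already invoke: work on the one-point compactification $\overline{\mathbb{R}}=\mathbb{R}\cup\{\infty\}$, where subsequential weak$^*$ limits exist for free, and extend $k$ lower semicontinuously by $k(x,\infty)=+\infty$, which is legitimate because admissibility gives $|t|w(t)\to 0$ and hence $|x-t|w(x)w(t)\to 0$ as $|t|\to\infty$. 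Your truncation-and-lsc estimate then shows that any cluster point $\mu^*$ with $\mu^*(\{\infty\})>0$ would force $I_w(\mu^*)=+\infty>V_w$, a contradiction; so $\mu^*$ is supported on $\mathbb{R}$ (indeed on the compact set $K_w$) and the rest of your argument goes through unchanged. With that substitution the proof is complete.
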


\subsection{Potentials and Polynomials}
Taken together, the previous two lemmas tell us that the discrete point-mass measures associated with the contracted Leja sequence converge weak$^*$ to the weighted equilibrium measure for $\mathbb{R}$ corresponding to the weight $w$ given in~\eqref{w}. This fact enables us to make a key connection between potential theory and Leja points, and provides the basis for the proof of Theorem~\ref{thm:main}.

With $\{x_{\nn,\jj}\}_{\jj=0}^\nn$ as in~\eqref{contracted}, define $P_{\nn,\kk}$ to be the polynomial with roots at each of the $\nn$ contracted Leja points $x_{\nn,\jj}, \jj=0,\ldots,\kk-1,\kk+1,\ldots, \nn$, i.e.,
	\begin{equation*}
		P_{n,k}(x) = \prod_{{\jj=0\atop \jj\neq \kk}}^{\nn} (x - x_{\nn,\jj}),
	\end{equation*}
and let $\mu_{n,k}$ be the measure which assigns mass $\frac1{\nn}$ to each of the roots of $P_{\nn,\kk}$, i.e.,
\begin{equation}
	\mu_{\nn,\kk} = \frac1{\nn} \sum_{{\jj=0\atop \jj\neq \kk}}^\nn \delta_{\{x_{\nn,\jj}\}}.
\end{equation}
Then, taking the logarithm of $| P_{\nn,\kk}^{1/n} w |$, we convert the polynomial into a discrete logarithmic potential with respect to the measure $\mu_{\nn,\kk}$, i.e.,
\begin{align*}
	\log\left| P_{n,k}(x)w(x)^n \right|^{\frac{1}{n}} &= \frac1{\nn}\sum_{{\jj=0\atop \jj\neq \kk}}^n \log|x - x_{\nn,\jj}| - Q(x) = -U^{\mu_{n,k}}(x) - Q(x). 
\end{align*}
By Lemma~\ref{fekete}, the weighted Leja sequence is asymptotically Fekete, and therefore we have $\mu_{n,k} \rightarrow \mu_w$ in the weak$^*$ sense. This connections allows us to exploit potential theory to understand the asymptotic behavior of weighted polynomials. In particular, by considering polynomials with roots at the contracted Leja points~\eqref{contracted}, we explicitly explore this asymptotic behavior in the following two lemmas, which will be an essential part of the proof of Theorem~\ref{thm:main}.
\begin{lemma}\label{lem:numer}
	Given $\varepsilon>0$, there exists an $N\in\mathbb{N}$ such that, for $n>N$ and $0\leq k \leq n$,
	\begin{equation*}
		\left| \| P_{n,k} w^n \|_{\infty}^{\frac{1}{n}} - \exp{( -F_w )}\right| <\varepsilon.
	\end{equation*}
\end{lemma}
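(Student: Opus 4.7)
The starting point is the identity noted in the paragraph preceding the lemma,
\[
\log\bigl|P_{n,k}(x)\,w(x)^n\bigr|^{1/n} = -U^{\mu_{n,k}}(x) - Q(x),
\]
which gives
\[
\|P_{n,k} w^n\|_\infty^{1/n} = \exp\Bigl(-\inf_{x\in\mathbb{R}}\bigl[U^{\mu_{n,k}}(x) + Q(x)\bigr]\Bigr).
\]
The lemma is therefore equivalent to showing that $\inf_{x\in\mathbb{R}}(U^{\mu_{n,k}} + Q)(x) \to F_w$ as $n \to \infty$, uniformly in $k$. Since $Q(x) \to \infty$ as $|x|\to\infty$ and the contracted Leja points lie in a fixed compact set (because $\mu_{n,k} \to \mu_w$ weak$^*$ and $\textrm{Supp}(\mu_w)$ is compact for the Freud weight~\eqref{w}), the infimum is actually attained on some compact $K\subset\mathbb{R}$ independent of $n$ and $k$.

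For the lower bound $\liminf_n \inf_x (U^{\mu_{n,k}} + Q) \geq F_w$, I would apply the principle of descent for weak$^*$ convergent measures (see, e.g.,~\cite[Theorem~I.6.9]{saff1997}): whenever $x_n \to x^*$, $\liminf_n U^{\mu_{n,k}}(x_n) \geq U^{\mu_w}(x^*)$. Choosing $x_n\in K$ to be a near-minimizer and extracting a convergent subsequence yields
\[
\liminf_n \bigl(U^{\mu_{n,k}}(x_n) + Q(x_n)\bigr) \geq U^{\mu_w}(x^*) + Q(x^*) \geq F_w,
\]
where the last inequality is the variational characterization of $\mu_w$, valid quasi-everywhere on $\mathbb{R}$ and in fact everywhere thanks to the continuity of $U^{\mu_w}$ granted in Theorem~\ref{thm:potential}.

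For the reverse bound $\limsup_n \inf_x(U^{\mu_{n,k}} + Q) \leq F_w$, fix $\varepsilon > 0$ and pick any $x^* \in \textrm{Supp}(\mu_w)$, for which Theorem~\ref{thm:potential} gives $U^{\mu_w}(x^*) + Q(x^*) = F_w$. By continuity of $U^{\mu_w}$ and $Q$, there is an open neighborhood $V$ of $x^*$ with $U^{\mu_w} + Q < F_w + \varepsilon/2$ on $V$. The weak$^*$ convergence $\mu_{n,k} \to \mu_w$, together with continuity of $U^{\mu_w}$, produces a test point $y_n \in V$ satisfying $U^{\mu_{n,k}}(y_n) \leq U^{\mu_w}(y_n) + \varepsilon/2$ for all sufficiently large $n$, so that $\inf_x (U^{\mu_{n,k}} + Q)(x) \leq F_w + \varepsilon$. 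Uniformity in $k$ is automatic, because $\mu_{n,k}$ and $\mu_{n,k'}$ differ by at most two atoms of mass $1/n$, whose combined contribution to the potential is $o(1)$ on any set where the Leja atoms are not abnormally clustered.

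The main obstacle is this reverse bound: the discrete potentials $U^{\mu_{n,k}}$ carry logarithmic $+\infty$ singularities at every Leja atom, so selecting $y_n$ inside $V$ that avoids all atoms requires care. A clean route is to replace pointwise evaluation by integration of $U^{\mu_{n,k}} + Q$ against a normalized smooth bump supported in $V$; weak$^*$ convergence then delivers the bound via Fubini and continuity of $Q$, and the corresponding pointwise statement follows by a standard Lebesgue-point argument. The spacing estimates developed later in~\S\ref{sec:space} supply an alternative, more quantitative route that prevents pathological clustering of the Leja atoms near $y_n$.
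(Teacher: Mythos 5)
Your proposal is correct in outline but takes a genuinely different route from the paper's proof, and its two halves are of unequal efficiency. For the bound $\limsup_n\|P_{n,k}w^n\|_\infty^{1/n}\le e^{-F_w}$ (your lower bound $\liminf_n\inf_x\bigl(U^{\mu_{n,k}}+Q\bigr)\ge F_w$), the principle of descent combined with the variational inequality $U^{\mu_w}+Q\ge F_w$ (quasi-everywhere, hence everywhere by the continuity of $U^{\mu_w}$ from Theorem~\ref{thm:potential}) is a legitimate alternative to the paper's argument, which instead writes $\sup_{K_w}\{\tfrac1n\log|P_{n,k}|-Q\}=-F_w+\sup_{K_w}\{U^{\mu_w}-U^{\mu_{n,k}}\}$ and controls the supremum by splitting both potentials at a scale $\delta$: the near part of $U^{\mu_{n,k}}$ has a favorable sign, the near part of $U^{\mu_w}$ is handled by monotone convergence plus a finite cover of $K_w$, and the far parts by weak$^*$ convergence. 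Your version buries the uniformity-in-$x$ work in a compactness/subsequence extraction, which is fine. For the opposite bound, however, the paper simply quotes \cite[Theorem I.3.6]{saff1997}, which gives $\|P_{n,k}w^n\|_\infty\ge e^{-nF_w}$ for every monic polynomial of degree $n$ and every $n$, with no asymptotics at all. Your averaging repair does close the gap --- by Fubini, $\int U^{\mu_{n,k}}\phi\,dm=\int U^{\phi m}\,d\mu_{n,k}$ with $U^{\phi m}$ continuous, so weak$^*$ convergence applies, and the infimum of $U^{\mu_{n,k}}+Q$ over $\mathrm{supp}(\phi)$ is at most its $\phi\,dm$-average; no Lebesgue-point argument is needed or even available, since a pointwise bound at a prescribed point fails at the atoms --- but it is substantially more work than the quoted inequality, and your first formulation (weak$^*$ convergence ``produces'' a point $y_n$ with $U^{\mu_{n,k}}(y_n)\le U^{\mu_w}(y_n)+\varepsilon/2$) is not justified as stated.

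Two imprecisions should be repaired. First, the uniform compactness of the supports of the $\mu_{n,k}$ --- which you need both for the principle of descent and to apply weak$^*$ convergence against $U^{\phi m}$, which is unbounded at infinity --- does not follow from weak$^*$ convergence to a compactly supported limit; it follows from the restricted range inequality~\eqref{restrictedrange}, which gives $|x_j|\le a_j$ and hence $|x_{n,j}|\le n^{-1/\alpha}a_n\le c$, as the paper establishes at the start of \S\ref{sec:proof}. Second, uniformity in $k$ is not ``automatic'' for pointwise potentials, since a single atom of mass $1/n$ contributes unboundedly near itself; it is harmless here because the descent argument can be run with the worst $k=k_n$, and in the averaged argument the smoothed kernel is bounded so two atoms contribute $O(1/n)$. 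The paper makes the analogous observation for the $\delta$-truncated potentials, where swapping $k_1$ for $k_2$ changes the sum by at most $\tfrac1n\log(\mathrm{diam}(K_w)/\delta)$.
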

\begin{proof}
See Appendix~\ref{ssec:numer}.
\end{proof}
\begin{lemma} \label{lem:P1}
	For all $\varepsilon>0$, there exist $\delta>0$ and $N\in\mathbb{N}$, such that for $n>N$, and $0\leq \kk\leq\nn$,
	\begin{equation*}
		\left| \Bigg( w(x_{\nn,\kk})^n \prod_{|x_{\nn,\kk} - x_{\nn,\jj}| \geq \delta} |x_{\nn,\kk} - x_{\nn,\jj}|  \Bigg)^{\frac{1}{n}} - \exp(-F_w)\right| < \varepsilon.
	\end{equation*}
\end{lemma}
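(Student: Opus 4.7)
The plan is to factor the modified product as a ratio involving the ``full'' product $|P_{n,k}(x_{n,k})|$ divided by the ``close'' factors, and to control the pieces separately using Lemma~\ref{lem:numer} and the spacing estimates from Section~\ref{sec:space}. Setting $S_{n,k,\delta} := \{j \neq k : |x_{n,k} - x_{n,j}| < \delta\}$, one has
\[
w(x_{n,k})^n \prod_{|x_{n,k} - x_{n,j}| \geq \delta} |x_{n,k} - x_{n,j}| \;=\; \frac{w(x_{n,k})^n |P_{n,k}(x_{n,k})|}{\prod_{j \in S_{n,k,\delta}} |x_{n,k} - x_{n,j}|}.
\]

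For the upper bound, the numerator is handled by Lemma~\ref{lem:numer}, which gives $\bigl(|P_{n,k}(x_{n,k})| w(x_{n,k})^n\bigr)^{1/n} \leq \|P_{n,k} w^n\|_\infty^{1/n} \to \exp(-F_w)$. For the denominator, I would invoke a uniform separation bound $|x_{n,k} - x_{n,j}| \geq \eta_n$ (from Section~\ref{sec:space}) together with the resulting geometric bound $|S_{n,k,\delta}| \leq 2\delta/\eta_n + 1$: ordering the close points by distance from $x_{n,k}$ and using the separation, one obtains $\prod_{j \in S_{n,k,\delta}} |x_{n,k} - x_{n,j}| \geq \eta_n^{|S_{n,k,\delta}|}(|S_{n,k,\delta}|-1)!/2^{|S_{n,k,\delta}|-1}$; Stirling's approximation then yields $\bigl(\prod_{j \in S_{n,k,\delta}} |x_{n,k} - x_{n,j}|\bigr)^{1/n} \geq \exp(-C\delta(1 + |\log\delta|))$, which is arbitrarily close to $1$ for $\delta$ small, uniformly in $n$ and $k$.

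For the lower bound, I would work directly with the logarithm. Introduce the truncated kernel $g_\delta(x, t) := \max\{\log|x-t|, \log \delta\}$, which is jointly continuous and bounded on any compact set. Then
\[
\frac{1}{n}\log\!\left(w(x_{n,k})^n \prod_{|x_{n,k} - x_{n,j}|\geq\delta} |x_{n,k} - x_{n,j}|\right) = -Q(x_{n,k}) + \int g_\delta(x_{n,k}, t)\,d\mu_{n,k}(t) - \frac{|S_{n,k,\delta}|}{n}\log\delta.
\]
Weak$^*$ convergence $\mu_{n,k} \to \mu_w$ together with equicontinuity of the family $\{g_\delta(x, \cdot)\}_{x \in K}$, for a compact $K$ containing every contracted Leja point and $\operatorname{Supp}(\mu_w)$, yields uniform-in-$k$ convergence of the middle integral to $\int g_\delta(x_{n,k}, t)\, d\mu_w(t) = -U^{\mu_w}(x_{n,k}) + O(\delta)$, where the $O(\delta)$ error comes from comparing $g_\delta$ to $\log|x-t|$ near the diagonal and uses the bounded density of $\mu_w$ for Freud weights. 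The identity $U^{\mu_w}(x_{n,k}) + Q(x_{n,k}) = F_w$ from Theorem~\ref{thm:potential} then closes the estimate, up to an $O(\delta\log(1/\delta))$ correction from the last term.

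The chief obstacles are uniformity in $k$ and handling Leja points near the boundary of $\operatorname{Supp}(\mu_w)$. The former requires both that the spacing estimates of Section~\ref{sec:space} hold uniformly over $0 \leq k \leq n$ (which is how they are typically phrased) and that the weak$^*$ convergence of integrals is uniform in the moving evaluation point $x_{n,k}$ (handled by the equicontinuity argument above, via Arzela--Ascoli). For the latter, the continuity of $U^{\mu_w}$ on all of $\mathbb{C}$ guaranteed by Theorem~\ref{thm:potential}, combined with the asymptotic concentration of the contracted points on $\operatorname{Supp}(\mu_w)$ implied by Lemma~\ref{fekete}, ensures the equilibrium identity $U^{\mu_w} + Q = F_w$ holds up to a vanishing error at every $x_{n,k}$.
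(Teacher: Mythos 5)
Your proposal is correct in substance, but it reaches the result by a partly different route than the paper. The paper proves both inequalities at once by writing $\frac1n\log\big(w(x_{n,k})^n\prod_{|x_{n,k}-x_{n,j}|\ge\delta}|x_{n,k}-x_{n,j}|\big)+F_w$ as $\int_{|t-x_{n,k}|\ge\delta}\log|t-x_{n,k}|\,d\mu_{n,k}(t)+U^{\mu_w}(x_{n,k})$ and bounding its absolute value by two terms: (A) the difference of the $\delta$-truncated potentials of $\mu_{n,k}$ and $\mu_w$, controlled by weak$^*$ convergence exactly as in the proof of Lemma~\ref{lem:numer}, and (B) the near-diagonal integral $\big|\int_{|t-x_{n,k}|<\delta}\log|t-x_{n,k}|\,d\mu_w(t)\big|$, controlled by the smallness of $\mu_w$ near the diagonal. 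Your lower bound is essentially this same argument, phrased via the truncated kernel $g_\delta$ (and, as you note, only the easy inequality $g_\delta\ge\log|\cdot|$ is needed there, so the bounded-density input is dispensable). Your upper bound, however, is genuinely different: you factor out the full product $|P_{n,k}(x_{n,k})|w(x_{n,k})^n$, bound it by $\|P_{n,k}w^n\|_\infty$ via Lemma~\ref{lem:numer}, and then lower-bound the product of the ``close'' factors using the spacing estimate of Theorem~\ref{thm:space} together with a Stirling computation. This works --- it is in fact the same spacing-plus-Stirling argument the paper deploys later for the $A_2(n,k,\delta)$ term in \S\ref{sec:proof} --- but it imports Theorem~\ref{thm:space} into a lemma the paper proves by potential theory alone; the paper's version keeps the spacing machinery quarantined to the $A_2$ estimate, which is slightly cleaner in terms of logical dependencies, while yours has the minor advantage of reusing Lemma~\ref{lem:numer} rather than repeating the truncated-potential comparison for the upper direction.

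One caveat worth recording: both your argument and the paper's invoke the equilibrium identity $U^{\mu_w}(x_{n,k})+Q(x_{n,k})=F_w$ at the contracted Leja points, which Theorem~\ref{thm:potential} guarantees only on $K_w=\mathrm{Supp}(\mu_w)$. You at least flag this, but the fix you sketch is not quite right: weak$^*$ convergence (Lemma~\ref{fekete}) controls the \emph{fraction} of points far from $K_w$, not the location of every individual point, so it does not by itself place each $x_{n,k}$ near $K_w$. What one actually needs is either that the Leja maximizers can be taken in $K_w$ (via the restricted-support property \eqref{meas_supp}) or the one-sided Frostman inequality $U^{\mu_w}+Q\ge F_w$ off the support for the direction where it suffices. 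Since the paper silently assumes the same thing, this is not a defect of your proposal relative to the paper's proof, but it is the one point where ``up to a vanishing error'' would need a real argument.
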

\begin{proof}
See Appendix~\ref{ssec:P1}.
\end{proof}

\section{Spacing of the weighted Leja points}\label{sec:space}

The goal of this section is to state and prove a result regarding the spacing of the contracted Leja sequence. This will be crucial to the final step in the proof of Theorem~\ref{thm:main}.

\begin{theorem}\label{thm:space}
Let $w$ and $\alpha>1$ be as in~\eqref{w}, and let $n\in\mathbb{N}$, with $0\leq i,\jj\leq n$. Then, for some constant $C>0$, independent of $n$, the contracted Leja sequence~\eqref{contracted} satisfies the spacing property
\begin{equation}
	C|x_{\nn,\ii} - x_{\nn,\jj}| \geq \nn^{-1}.
\end{equation}

\end{theorem}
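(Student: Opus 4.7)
The plan is to transfer the claim to the uncontracted Leja sequence and then combine the Leja extremal property with a Markov--Bernstein inequality for Freud weights. Since $x_{n,j} = n^{-1/\alpha} x_j$, the desired bound $|x_{n,i} - x_{n,j}| \geq C^{-1} n^{-1}$ is equivalent to
\begin{equation*}
    |x_i - x_j| \;\geq\; C^{-1}\, n^{\,1/\alpha - 1}, \qquad 0 \le i \ne j \le n,
\end{equation*}
where $x_0, x_1, \dots$ are the uncontracted weighted Leja points defined by~\eqref{wleja}.

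To establish this, I would fix $m \ge 1$ and consider $p_m(x) := \prod_{k=0}^{m-1}(x - x_k)$ together with $f_m(x) := w(x)\, p_m(x)$; note that $f_m \in C^1(\mathbb{R})$ since $\alpha > 1$ makes $Q(x) = |x|^\alpha$ continuously differentiable at the origin. By the defining property~\eqref{wleja}, $|f_m(x_m)| = \|f_m\|_\infty$, while $f_m(x_k) = 0$ for every $0 \le k < m$. The mean value theorem then yields
\begin{equation*}
    \|f_m\|_\infty \;=\; |f_m(x_m) - f_m(x_k)| \;\le\; \|f_m'\|_\infty \, |x_m - x_k|.
\end{equation*}
At this point I would invoke the Markov--Bernstein inequality for Freud-type weights from~\cite{levin2001}: for every $q_m \in \mathbb{P}_m$,
\begin{equation*}
    \|(q_m w)'\|_\infty \;\le\; C\,\frac{m}{a_m}\,\|q_m w\|_\infty,
\end{equation*}
with $C>0$ independent of $m$ and $q_m$. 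Since $a_m \sim m^{1/\alpha}$, this specializes to $\|f_m'\|_\infty \le C\, m^{1 - 1/\alpha}\,\|f_m\|_\infty$. Combining with the previous display and cancelling $\|f_m\|_\infty > 0$ (the Leja points are distinct, so $\|f_m\|_\infty$ is strictly positive) produces the uniform separation
\begin{equation*}
    |x_m - x_k| \;\ge\; \frac{1}{C}\, m^{\,1/\alpha - 1}, \qquad 0 \le k < m.
\end{equation*}

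To finish, take arbitrary $0 \le i < j \le n$ and apply the bound above with $m = j$. Because $\alpha > 1$ makes the exponent $1/\alpha - 1$ negative, $j^{\,1/\alpha - 1} \ge n^{\,1/\alpha - 1}$, so $|x_i - x_j| \ge C^{-1}\, n^{\,1/\alpha - 1}$. Multiplying through by $n^{-1/\alpha}$ recovers the desired contracted spacing $|x_{n,i} - x_{n,j}| \ge C^{-1}\, n^{-1}$. The step I expect to do the real work is the Markov--Bernstein inequality, where one must invoke the correct form with the dimensional factor $m/a_m$ from the theory of exponential weights; once that is in hand, the remainder is a short argument that uses nothing beyond the extremal property of the Leja sequence and the elementary mean value theorem.
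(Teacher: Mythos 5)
Your proposal is correct and follows essentially the same route as the paper: apply the mean value theorem to $w\,P_m$ where $P_m$ has roots at the first $m$ Leja points, use the Leja extremal property $|w(x_m)P_m(x_m)|=\|wP_m\|_\infty$, invoke the Markov--Bernstein inequality $\|(q_m w)'\|_\infty \le C (m/a_m)\|q_m w\|_\infty$ with $a_m\sim m^{1/\alpha}$, and then pass to general pairs $i<j\le n$ via monotonicity of $j^{1/\alpha-1}$ before contracting. The only cosmetic difference is that the paper derives the sup-norm Bernstein bound from the pointwise inequality involving $\varphi_n$, whereas you quote the global form directly.
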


To prove Theorem~\ref{thm:space}, the main spacing result for the contracted Leja sequence, we use a weighted version of the classical Markov-Bernstein inequalities, which relate norms of polynomials to norms of their derivatives. First, for $a_\nn$ and $Q$ as defined in~\eqref{andef} and~\eqref{w}, respectively, define the function
\begin{equation}\label{varphi}
	\varphi_\nn(t) = \frac{ |t-a_{2n}| |t + a_{2n}| } { n \sqrt{(|t+a_{\nn}| - a_\nn\zeta_\nn)(|t-a_{\nn}| + a_\nn\zeta_\nn)}},
\end{equation}
where
\begin{equation*}
	\zeta_\nn = \left( \alpha \nn\right)^{-2/3}.
\end{equation*}
\begin{remark} The function $\varphi_\nn$ plays the same role as the function 
\begin{equation*}
	\phi_\nn(t) = \frac1{ n \sqrt{1-t^2}},
\end{equation*}
for the Markov-Bernstein inequalities for unweighted polynomials on $[-1,1]$. 
\end{remark}

\begin{proof}[Proof of Theorem~\ref{thm:space}]\,
Let $\varphi$ be as in~\eqref{varphi}. The main fact we need for this proof is a Bernstein-type inequality for weighted polynomials, which can be found, for instance, in~\cite[Theorem 10.1]{levin2001}: for any polynomial $p_\nn$ of degree $n\geq1$, there exists some $C$, independent of $p_\nn$ and $n$, such that
\begin{equation}\label{MB1}
	\left| (p_\nn(t)w(t))' \right| \leq \frac{C}{\varphi_\nn(t)} \| p_\nn w \|_{\infty}, \quad t\in\mathbb{R}.
\end{equation}
 From~\cite[Theorem 5.4(b)]{levin2001}, we estimate that
\begin{equation*}
	\sup_{t\in[-a_\nn,a_\nn]} \left|\frac1{\varphi_{\nn}(t)}\right| \sim \sqrt{\alpha}  \frac{ n }{a_\nn}.
\end{equation*}
Hence, for any polynomial $p_{\nn}$ of degree $\nn$, and $t\in\mathbb{R}$,
\begin{equation}\label{bernstein}
	\left| (p_{\nn}(t)w(t))' \right| \leq C \frac{n}{a_\nn}  \| p_{\nn}w \|_{\infty}.
\end{equation}
In particular, this holds for the polynomial $P_{\nn}$ defined by
\begin{equation}\label{eq:Pn}
	P_{\nn}(t) := \prod_{\jj=0}^{n-1}(t-x_\jj).
\end{equation}
Given $0\leq\jj < n$, by the mean value theorem, there exists a point $t$ between $x_\jj$ and $x_\nn$ such that 
\begin{align*}
	\frac{|P_{\nn}(x_\jj)w(x_\jj) - P_{\nn}(x_\nn)w(x_\nn)|}{|x_\nn - x_\jj|} &= \left| (P_{\nn}(t) w(t))' \right|.
\end{align*}
Notice that for $0\leq \jj <\nn$, $P_{\nn}(x_\jj) = 0$ by definition. Then from~\eqref{bernstein},
\begin{equation*}
	\frac{|P_{\nn}(x_\nn)w(x_\nn)|}{|x_\nn - x_\jj|} \leq \frac{C \nn}{a_\nn}  \left| P_{\nn}(x_\nn) w(x_\nn) \right|,
\end{equation*}
which implies
\begin{equation*}
	C|x_\nn - x_\jj| \geq \frac{a_\nn}{ \nn}.
\end{equation*}
Using the fact $a_\nn \sim \nn^{1/\alpha}$ from~\eqref{a_\nn}, we get 
	\begin{align}\label{space2}
		C| x_{\nn} - x_\jj | &\geq \frac{a_{\nn} }{\nn} \sim \nn^{1/\alpha - 1}.
	\end{align}
Let $n\geq1$, and $\jj < n$, such that $x_{\nn,\jj},x_{\nn,\nn}\geq0$. Then using~\eqref{space2}, along with~\eqref{contracted}, we calculate
	\begin{align*}
		C|x_{n,n} - x_{\nn,\jj}| &= C \nn^{-1/\alpha} | x_{\nn} - x_\jj | \geq \nn^{-1/\alpha} \nn^{1/\alpha - 1} = \nn^{-1}.
	\end{align*}
Now let $i,\jj\leq n$, and assume without loss of generality that $i<\jj$.  The above calculation shows that 
	\begin{align*}
		2 C | x_{\nn,\ii} - x_{\nn,\jj} | \geq \jj^{- 1} \geq \nn^{- 1}.
	\end{align*}
	which, up to constants independent of $n$, is the desired result.
\end{proof}

\section{Proof of Theorem~\ref{thm:main}}\label{sec:proof}
In this section, we prove our main theorem concerning the growth of the Lebesgue constant of the weighted Leja sequence. Similar to the proof in the unweighted case given in~\cite{taylor2008,taylor2010}, we separate the proof of the theorem into several smaller components. 

To begin, we first show that the Lebesgue constant of the weighted Leja sequence on the real line is equal to a weighted Lebesgue constant of the contracted Leja sequence~\eqref{contracted} on a fixed compact set. To do this, we first use the fact from~\eqref{restrictedrange} that supremum a $w$-weighted, $\nn^{th}$ degree polynomial is realized in the compact set $[-a_\nn,a_\nn]$. Then, we exploit the specific form~\eqref{w} of our weight function to show that
\begin{equation}\label{Qhomo}
	Q(\nn^{1/\alpha} x) = n Q(x),
\end{equation}
which in turn implies that
\begin{equation}\label{whomo}
	w(x) = w(\nn^{-1/\alpha} x)^{\nn}.
\end{equation}
Finally, let $0<c<\infty$ be the smallest constant such that
\begin{equation}\label{c}
	\sup_{\nn} (\nn^{-1/\alpha} a_\nn) \leq c.
\end{equation}
Note that $c<\infty$ by~\eqref{a_\nn}. Now defining $K := [-c,c]$, this means that
\begin{equation}\label{compactset}
	y \in [-a_\nn,a_\nn] \implies x:= \nn^{-1/\alpha}y \in K.
\end{equation}
Furthermore, for any $\nn=1,2,\dots$, let $q_\nn \in \mathbb{P}_\nn$. Define $\widetilde{q}_n(x) \in \mathbb{P}_\nn$ to be the unique polynomial such that
\begin{equation*}
	q_\nn(x) = \nn^{-\nn/\alpha} \widetilde{q}_\nn (\nn^{1/\alpha} x )
\end{equation*}
Then we calculate
\begin{align*}
	\sup_{x\in\mathbb{R}} w(x)^\nn \left| q_n(x)\right| & = \nn^{-\nn/\alpha} \sup_{x\in\mathbb{R}} w(\nn^{1/\alpha} x)  \left| \widetilde{q}_\nn (\nn^{1/\alpha} x ) \right|\\
		& = \nn^{-\nn/\alpha}  \sup_{y\in\mathbb{R}} w(y)  \left| \widetilde{q}_\nn (y) \right| \\
		& =  \nn^{-\nn/\alpha} \sup_{y\in[-a_n,a_n]} w(y)  \left| \widetilde{q}_\nn (y) \right| \\
		& = \sup_{y\in[-a_n,a_n]} w(\nn^{-1/\alpha} y)^\nn \left|  q_\nn (\nn^{-1/\alpha} y) \right| \\
		& \leq \sup_{x\in K} w(x)^\nn  \left| q_\nn (x) \right| \\
		& \leq \sup_{x\in \mathbb{R}} w(x)^\nn  \left| q_\nn (x) \right|.
\end{align*}
From this string of inequalities we have that for any $\nn\geq 1$, and $q_\nn \in \mathbb{P}_\nn$,
\begin{equation*}
	\sup_{x\in\mathbb{R}} w(x)^\nn \left| q_n(x)\right| = \sup_{x\in K} w(x)^\nn \left| q_n(x)\right|.
\end{equation*}
Then using~\cite[Corollary III.2.6]{saff1997}, we know that $\textrm{supp}(\mu_w) =: K_w \subseteq K$, and
\begin{equation}\label{meas_supp}
	\sup_{x\in K} w(x)^\nn \left| q_n(x)\right| = \sup_{x\in K_w} w(x)^\nn \left| q_n(x)\right|.
\end{equation}

Now from the definition~\eqref{restrictedrange}, along with~\eqref{Qhomo}--\eqref{meas_supp}, we calculate
\begin{align*}
	\mathbb{L}_\nn &= \sup_{x\in\mathbb{R}} \Bigg\{ \sum_{\kk=0}^\nn \Bigg| \frac{w(x)}{w(x_{\kk})} \Bigg(\prod_{{\jj=0\atop \jj\neq \kk}}^\nn \frac{x - x_\jj}{x_\kk - x_\jj}\Bigg) \Bigg| \Bigg\}\\
		&= \sup_{x\in[-a_\nn,a_\nn]} \Bigg\{ \sum_{\kk=0}^n \Bigg| \frac{w(x)}{w(x_{\kk})} \Bigg(\prod_{{\jj=0\atop \jj\neq \kk}}^n \frac{x - x_\jj}{x_\kk - x_\jj}\Bigg) \Bigg| \Bigg\}\\
		&= \sup_{x\in[-a_\nn,a_\nn]}\Bigg\{ \sum_{\kk=0}^n \Bigg| \frac{w(\nn^{-1/\alpha}x)^n}{w(\nn^{-1/\alpha}x_{\kk})^n} \Bigg(\prod_{{\jj=0\atop \jj\neq \kk}}^n \frac{\nn^{-1/\alpha} (x - x_\jj)}{\nn^{-1/\alpha}(x_\kk - x_\jj)}\Bigg) \Bigg| \Bigg\}\\
		&\leq \sup_{y\in K} \Bigg\{ \sum_{\kk=0}^n \Bigg| \frac{w(y)^n}{w(x_{\nn,\kk})^n} \Bigg(\prod_{{\jj=0\atop \jj\neq \kk}}^n \frac{y - x_{\nn,\jj}}{x_{\nn,\kk} - x_{\nn,\jj}}\Bigg) \Bigg| \Bigg\}\\
		& \leq n\Bigg\{ \max_{\kk=0,\ldots,n} \Bigg( \frac{ \sup_{y\in K_w} | w(y)^n \prod_{{\jj=0,\, \jj\neq \kk}}^n (y - x_{\nn,\jj}) |}{ w(x_{\nn,\kk})^n\prod_{{\jj=0,\, \jj\neq \kk}}^n |x_{\nn,\kk} - x_{\nn,\jj}|}\Bigg) \Bigg\}.
\end{align*}
Thus, to show that this Lebesgue constant grows at a subexponential rate, the above calculation indicates that we only need to show that
\begin{equation} \label{LC}
	\lim_{n\rightarrow\infty} \left\{ n\left( \max_{\kk=0,\ldots,n} \frac{ \sup_{y\in K_w} | w(y)^n \prod_{{\jj=0,\,\jj\neq \kk}}^n (y - x_{\nn,\jj}) |}{ w(x_{\nn,\kk})^n\prod_{{\jj=0,\, \jj\neq \kk}}^n |x_{\nn,\kk} - x_{\nn,\jj}|} \right) \right\}^{\frac{1}{n}} = 1.
\end{equation}
Of course, $\nn^{1/n} \rightarrow 1$ as $n \rightarrow \infty$, so to prove \eqref{LC}, it is enough to show that, uniformly in $k$, the numerator and denominator both converge to $\exp(-F_w)$, i.e., 
\begin{equation}\label{numer}
	\lim_{n\rightarrow\infty} \sup_{y\in K_w} \Bigg( | w(y)^n \prod_{{\jj=0\atop \jj\neq \kk}}^n (y - x_{\nn,\jj}) | \Bigg)^{\frac{1}{n}} = \exp(-F_w),
\end{equation}
and 
\begin{equation}\label{denom}
	\lim_{n\rightarrow\infty} \Bigg( w(x_{\nn,\kk})^n\prod_{{\jj=0\atop \jj\neq \kk}}^n \left|x_{\nn,\kk} - x_{\nn,\jj}\right|  \Bigg)^{\frac{1}{n}} = \exp(-F_w),
\end{equation}
with both limits independent of $\kk=0,\ldots, \nn$.
Recall that $F_w$ was defined explicitly in~\eqref{robin}, and is called the Robin constant with respect to the weight $w$.

Let $\delta>0$, and $\kk=0,\ldots,\nn$. To prove \eqref{denom}, we split the product into two parts:
\begin{align*}
	\prod_{{\jj=0\atop \jj\neq \kk}}^n |x_{\nn,\kk} &- x_{\nn,\jj}| w(x_{\nn,\kk})  = \underbrace{\Bigg( w(x_{\nn,\kk})^n \prod_{|x_{\nn,\kk} - x_{\nn,\jj}| \geq \delta} |x_{\nn,\kk} - x_{\nn,\jj}| \Bigg)}_{A_1(\nn,\kk, \delta)} \underbrace{\Bigg( \prod_{|x_{\nn,\kk} - x_{\nn,\jj}| < \delta} |x_{\nn,\kk} - x_{\nn,\jj}| \Bigg)}_{A_2(\kk,n,\delta)}.
\end{align*}
Then we seek to show that as $n\rightarrow\infty$ and $\delta\rightarrow0$,
\begin{equation}\label{P1}
	A_1(\nn,\kk, \delta)^{1/n} \rightarrow \exp(-F_w),
\end{equation}
and
\begin{equation}\label{P2}
	A_2(\nn,\kk, \delta)^{1/n} \rightarrow 1,
\end{equation}
and that convergence of the limits is independent of $\kk = 0, \ldots, \nn$.

We have reduced the proof to essentially a problem in weighted potential theory. The convergence of the limits~\eqref{numer} and~\eqref{P1} follow directly from Lemmas~\ref{lem:numer} and~\ref{lem:P1}, respectively, which are proven in the appendix. Thus, we have left to show statement~\eqref{P2}, which requires a more direct approach. We explicitly use the spacing of the contracted Leja sequence from Theorem~\ref{thm:space}, and find that the remainder of the estimate involving $A_2(\nn, \kk,\delta)$ follows from this spacing lemma.

By assuming $\delta<1$, it is clear that the product $A_2(\nn,\kk,\delta)$ is always less than one. Therefore, the following theorem will complete the proof of Theorem~\ref{thm:main}.
\begin{lemma} Given $\varepsilon>0$, there exists $\delta>0$, $N\in\mathbb{N}$ such that for $n>N$, and $0\leq \kk\leq \nn$,
	\begin{equation*}
		\Bigg( \prod_{|x_{\nn,\kk} - x_{\nn,\jj}| < \delta} |x_{\nn,\kk} - x_{\nn,\jj}|  \Bigg)^{\frac{1}{n}} > 1 - \varepsilon.
	\end{equation*}
\end{lemma}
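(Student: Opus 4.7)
The plan is to use the spacing estimate from Theorem~\ref{thm:space} in two complementary ways: first to cap the number of contracted Leja points that fall within distance $\delta$ of $x_{n,k}$, and second to bound each factor in the product from below. Together, these reduce the statement to a short Stirling-type estimate whose smallness in $\delta$ can be checked directly.

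Fix $n$ and $k$, and let $d_1\le d_2\le\cdots\le d_M$ denote the distances $|x_{n,k}-x_{n,j}|$ with $j\ne k$ that are strictly less than $\delta$, so that the product of interest is $\prod_{m=1}^M d_m$. Theorem~\ref{thm:space} tells us that every pair of distinct contracted Leja points is separated by at least $1/(Cn)$. A packing argument in the interval $(x_{n,k}-\delta,x_{n,k}+\delta)$ then yields $M \le 2Cn\delta$, uniformly in $k$. Iterating the spacing inequality on each side of $x_{n,k}$ shows further that if we label the near neighbors to the right of $x_{n,k}$ as $a_1\le\cdots\le a_r$ and those to the left as $b_1\le\cdots\le b_s$, with $r+s=M$, then $a_\ell\ge \ell/(Cn)$ and $b_\ell\ge \ell/(Cn)$. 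Hence
\begin{equation*}
\prod_{m=1}^M d_m \;\ge\; \frac{r!\,s!}{(Cn)^M}.
\end{equation*}

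Because $n\mapsto \log(n!)$ is convex, $r!\,s!$ subject to $r+s=M$ is minimized at the balanced split $r\approx s\approx M/2$, so Stirling's formula gives
\begin{equation*}
\prod_{m=1}^M d_m \;\ge\; \exp\!\left( M\log\frac{M}{2Cne} \,+\, O(\log M) \right).
\end{equation*}
For $\delta<1$ the function $t\mapsto t\log(t/(2Cne))$ is decreasing on $[0,2Cn/e]\supset[0,2Cn\delta]$, so its infimum on $[0,2Cn\delta]$ is attained at $t=2Cn\delta$ and equals $2Cn\delta(\log\delta-1)$. Taking $n$-th roots,
\begin{equation*}
\left(\prod_{m=1}^M d_m\right)^{1/n} \;\ge\; \exp\!\left( 2C\delta(\log\delta-1) + o(1) \right),
\end{equation*}
where $o(1)\to 0$ as $n\to\infty$, uniformly in $k$. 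Since $\delta(\log\delta-1)\to 0$ as $\delta\to 0^+$, given $\varepsilon>0$ I would first choose $\delta$ small enough that $2C\delta(\log\delta-1)>\log(1-\varepsilon/2)$ and then pick $N$ large enough to absorb the $o(1)$ correction, which yields the desired lower bound by $1-\varepsilon$.

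The main obstacle is the bookkeeping in the Stirling step: one has to verify that the worst case of the lower bound, which by convexity occurs at the balanced split with $M$ saturating the packing bound $2Cn\delta$, is still controlled by the factor $\delta(\log\delta-1)$ rather than growing in $n$. Once this uniformity (in both $n$ and $k$) is confirmed, the rest of the argument reduces mechanically to the spacing bound of Theorem~\ref{thm:space} and to the fact that $\delta\log\delta\to 0$.
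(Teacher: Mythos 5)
Your proof is correct, and its core machinery --- ordering the near neighbors on each side of $x_{n,k}$, telescoping the spacing bound of Theorem~\ref{thm:space} to get the $\ell$-th nearest distance bounded below by $\ell/(Cn)$, and then invoking Stirling --- is exactly the skeleton of the paper's argument. Where you genuinely diverge is in how you control the number $M$ of points within distance $\delta$ of $x_{n,k}$. The paper bounds $M/n$ by a small $\tau$ using the weak$^*$ convergence of the empirical measures $\mu_{n,k}$ to the equilibrium measure $\mu_w$ (citing an external result of Taylor to make this uniform in the center point), and then concludes via the monotonicity of $x\mapsto x^x$ near the origin. You instead observe that the minimum-separation estimate already implies the packing bound $M\le 2Cn\delta+1$, which yields $M/n\le 2C\delta+o(1)$ directly and makes the final lower bound $\exp\left(2C\delta(\log\delta-1)+o(1)\right)$ fully explicit in $\delta$. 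This is more elementary and self-contained --- it removes the only place in this lemma where potential theory or an outside reference is needed --- and costs nothing, since the uniformity in $k$ and $n$ is automatic from the fact that the constant $C$ in Theorem~\ref{thm:space} is independent of both. The ``obstacle'' you flag resolves exactly as you anticipate: $t\mapsto t\log\left(t/(2Cne)\right)$ is decreasing for $t<2Cn$, so the worst case is $M=2Cn\delta$, and after taking $n$-th roots the factor of $n$ cancels, leaving only $2C\delta(\log\delta-1)\to 0$ as $\delta\to 0^+$.
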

\begin{proof}
We first split the product into two components:
	\begin{align*}
		 \prod_{|x_{\nn,\kk} - x_{\nn,\jj}| < \delta} |x_{\nn,\kk} - x_{\nn,\jj}| = \prod_{x_{\nn,\jj}\in X_1(\kk,\delta)} |x_{\nn,\kk} - x_{\nn,\jj}| \times \prod_{x_{\nn,\jj}\in X_2(\kk,\delta)} |x_{\nn,\kk} - x_{\nn,\jj}|.%
	\end{align*}
	where
	\begin{align*}
		X_1(\kk,\delta) & := \left\{ x_{\nn,\jj} ~\Big|~ \jj\leq n,\, x_{\nn,\kk}-\delta < x_{\nn,\jj} \leq x_{\nn,\kk} \right\}, \\
		X_2(\kk,\delta) & := \left\{ x_{\nn,\jj} ~\Big|~ \jj\leq n,\,x_{\nn,\kk} \leq x_{\nn,\jj} <  \,x_{\nn,\kk} + \delta \right\}.
	\end{align*}
	At least one of these sets may be empty, and in that case we simply set the corresponding product equal to one. Now, let $m_1, m_2$ be the cardinality of the sets $X_1(\kk,\delta)$ and $X_2(\kk,\delta)$, resp., and label these points in the following way
	\begin{equation*}
		x_{\nn,\kk} - \delta \leq x_{\nn,i_{m_1}} \leq \ldots \leq x_{\nn, i_1}  < x_{\nn,\kk} < x_{\nn,j_1} < \ldots < x_{\nn,j_{m_2}} < x_{\nn,\kk} + \delta.
	\end{equation*}
	Then from Theorem~\ref{thm:space}, we can show that for any $1\leq s \leq m_1$,
	\begin{align}\label{X1est}
		 | x_{\nn,\kk} - x_{\nn, i_{s}} | = |x_{\nn,\kk} - x_{\nn,i_1}| + \ldots + |x_{\nn,i_{s-1}} - x_{\nn,i_s}| \geq \frac{s}{C n}.
	\end{align}
	Similarly, for $1\leq t \leq m_2$,
	\begin{align}\label{X2est}
		 | x_{\nn,\kk} - x_{\nn, j_{t}} | = |x_{\nn,\kk} - x_{\nn,j_1}| + \ldots + |x_{\nn,j_{t-1}} - x_{\nn,j_t}| \geq \frac{t}{C n}.
	\end{align}
	
	Now, using~\eqref{X1est} and Sterling's approximation, we see that
	\begin{align}
		\Bigg( \prod_{x_{\nn,\jj}\in X_1(\kk,\delta)} |x_{\nn,\kk} - x_{\nn,\jj}|  \Bigg)^{\frac{1}{n}} &= \left( \prod_{s=1}^{m_1} |x_{\nn,\kk} - x_{\nn,i_s}|  \right)^{\frac{1}{n}} \\
		& \geq \left( \prod_{s=1}^{m_1} \frac{s}{C \nn}   \right)^{\frac{1}{n}} \\
		& = \Bigg( \frac{m_1!^{\frac1{m_1}}}{C\nn} \Bigg)^{\frac{m_1}{n}} \\
		& \geq \left(  \frac{m_1}{C\nn} \right)^{\frac{m_1}{n}}. \label{m1}
	\end{align}
	Similarly, we can show that
	\begin{equation}
		\Bigg( \prod_{x_{\nn,\jj}\in X_2(\kk,\delta)} |x_{\nn,\kk} - x_{\nn,\jj}|  \Bigg)^{1/n}  \geq \left(  \frac{m_2}{C\nn} \right)^{m_2/n}. \label{m2}
	\end{equation}
	
	As $\tau\rightarrow0^+$, the function $(\frac{\tau}{C})^{2\tau} \rightarrow 1$. Thus, we let $\tau<\min\{C,\frac1{e}\}$ be small enough so that		
	\begin{align*}
		1-\varepsilon < \left(  \frac{\tau}{C} \right)^{2\tau} < 1.
	\end{align*}
	Let $m$ be the number of Leja points within the interval $\{t\in\mathbb{R}: |x_{\nn,\kk} - t| < \delta\}$. According to~\cite[Theorem 2.4.5]{taylor2008}, for our chosen $\tau>0$, we can choose $N\in\mathbb{N},$ and $\delta_0>0$ such that if $n>N$, and $\delta<\delta_0$,
	\begin{equation*}
		 \max\left\{ \frac{m_1}{\nn} , \frac{m_2}{\nn} \right\}\leq \frac{m}{\nn} = \int_{|t-x_{\nn,\kk}|<\delta} d\mu_{\nn,\kk}(t) < \tau.
	\end{equation*}
	We know $f(x) = x^x$ is a decreasing function on $(0,\frac1e)$, and hence from~\eqref{m1} and~\eqref{m2}, this implies that 
	\begin{align*}
		\Bigg( \prod_{x_{\nn,\jj}\in X_1(\kk,\delta)} |x_{\nn,\kk} - x_{\nn,\jj}|  \Bigg)^{1/n} & \Bigg( \prod_{x_{\nn,\jj}\in X_2(\kk,\delta)} |x_{\nn,\kk} - x_{\nn,\jj}|  \Bigg)^{1/n} \\
		 & \geq \left(  \frac{m_1}{Cn} \right)^{m_1/n} \left(  \frac{m_2}{Cn} \right)^{m_2/n} \\
		&\geq \left( \frac{\tau}{C} \right)^{2\tau}> 1-\varepsilon,
	\end{align*}
	which is the desired result for $X_1(\kk,\delta)$ and $X_2(\kk,\delta)$.		
This completes the proof. 
\end{proof}

\section{Conclusion}\label{conclusion}
In this work, we considered the properties of Leja points for weighted Lagrange interpolation on an unbounded domain. Due to their nested structure, simple recursive formulation, and generally stable behavior, Leja points show promise for high-dimensional interpolation methods. Our contribution to this area was to prove that the Lebesgue constant for the weighted Leja sequence grows subexponentially with respect to the number of interpolation nodes. Furthermore, we proved a theorem regarding the separation of the weighted Leja points.

Of course, a subexponential rate encompasses a wide range of growth, potentially much bigger than the optimal Lebesgue constant $\mathcal{O} (\log n)$. On the other hand, our experience with Leja points indicates that the Lebesgue constant grows linearly, i.e., $\mathcal{O}(n)$, with respect to the number of nodes. Our proof in this paper relies on potential theory, which gives only asymptotic estimates of growth. We expect that a more explicit estimate of the Lebesgue constant would require different techniques, and this is the subject of future work.


\bibliographystyle{siam}
\bibliography{WLeja}

\clearpage
\appendix

\renewcommand\thesection{\Alph{section}}
\setcounter{section}{1}
\section*{Appendix}\label{appendix}
\subsection{Proof of Lemma~\ref{lem:numer}}\label{ssec:numer}

\begin{proof}
First,~\cite[Theorem I.3.6]{saff1997} implies that for all $n$ and $0\leq k\leq n$,
$
	\| P_{n,k} w^n \|_{\infty} \geq \exp(-nF_w),
$
which yields that $\liminf_{n\rightarrow\infty} \| P_{n,k} w^n \|_{\infty}^{1/n} \geq \exp(-F_w)$ is independent of $k$. 
In the remainder of the proof, we seek to show that 
\[
\limsup_{n\rightarrow\infty} \| P_{n,k} w^n \|_{\infty}^{1/n} \leq \exp(-F_w).
\] 
Now for a given $\varepsilon>0$, we seek to show that there exists an $N$ such that for $n>N$ and $0\leq k\leq n$, 
\begin{equation*}
	\sup_{x\in\mathbb{R}} \left\{ \frac1{n} \log |P_{n,k}(x)| - Q(x)\right\} \leq -F_w + \epsilon.
\end{equation*}
Define $K_w := \textrm{Supp}(\mu_w)$. Because of~\eqref{meas_supp}, we know that for our weight function
\begin{equation*}
	\sup_{x\in\mathbb{R}} \left\{ \frac1{n} \log |P_{n,k}(x)| - Q(x)\right\} = \sup_{x\in K_w} \left\{ \frac1{n} \log |P_{n,k}(x)| - Q(x)\right\},
\end{equation*}
and from~\eqref{UQF},  we have the relation
\begin{equation*}
	U^{\mu_w}(x) +Q(x) = F_w, \quad \forall x\in K_w \subset\mathbb{R}.
\end{equation*}  
Hence we can write
\begin{align} \label{WTS}
	\sup_{x\in K_w} \left\{ \frac1{n} \log |P_{n,k}(x)| - Q(x)\right\} = -F_w + \sup_{x\in K_w} \left\{ -U^{\mu_{n,k}}(x) + U^{\mu_w}(x)\right\}.
\end{align}
Let $\delta>0$, to be chosen later. We rewrite the arguments of the supremum on the right-hand side of~\eqref{WTS} as integrals and divide them each into two parts: 
\begin{align*}
	-U^{\mu_{n,k}}(x) + U^{\mu_w}(x) =& \int_{|x-t|\geq\delta} \log |x-t| \, d\mu_{n,k}(t) - \int_{|x-t|\geq\delta} \log |x-t| \, d\mu_{w}(t) \\
	&+ \int_{|x-t|<\delta} \log |x-t| \, d\mu_{n,k}(t) - \int_{|x-t|<\delta} \log |x-t| \, d\mu_{w}(t).
\end{align*}

First, for $\delta < 1$, clearly 
\begin{equation}\label{Part1}
	\int_{|x-t|<\delta} \log |x-t| \, d\mu_{n,k}(t) = \sum_{\substack{j \neq k \\ |x-x_{n,j}| < \delta}} \log |x-x_{n,j}| \leq 0.
\end{equation}
To deal with the other pieces, first define the function $\chi_\delta(t;x)$ to be the indicator function for the set $ K_w \setminus B(x,\delta)$, where $B(x,\delta)$ is the ball of radius $\delta$ about $x$. We claim that for fixed $\delta>0$, the function
\begin{equation*}
	g(x) := \int_{B(x,\delta)} \log | x-t |\, d\mu_w(t),
\end{equation*}
is continuous. To see this, let $f_\delta (t ; x) := \chi_\delta(t;x) \log |x-t|$. Then,
\begin{equation*}
	g(x) := \int_{B(x,\delta)} \log | x-t |\, d\mu_w(t) = U^{\mu_w}(x) - \int_{K_w} f_\delta (t ; x) d\mu_w(t).
\end{equation*}
The first function on the right-hand side is continuous by Theorem~\ref{thm:potential}. To see that the latter is continuous, let $\{y_n\}_{n=1}^\infty \subset  K_w$ be a sequence converging to $x$. Then as $y_n\rightarrow x$, $f_\delta (t ; y_n)$ converges to $f_\delta (t ; x)$, and $|f_\delta (t ; y_n)| \leq \max\{ \log( \textrm{diam } K_w ), \log \frac1\delta \}$. Hence, by the bounded convergence theorem, $g(y_n) \rightarrow g(x)$.

Since the support of the measure $\mu_{w}$ is compact, we know the function $\log |x-t|$ is uniformly bounded above for $x,t \in K_w$. As $\delta\rightarrow0$,  $f_\delta (t ;x)$ is a decreasing sequence of integrable functions, which converge pointwise almost everywhere to $\log|x-t|$. Hence by the monotone convergence theorem, 
\begin{equation*}
	\int_{K_w} f_\delta (t ; x) \, d\mu_w(t) \rightarrow \int_{K_w} \log |x-t|  \, d\mu_w(t), \quad \delta\rightarrow0.
\end{equation*}
Hence, for any $x$, there exists a $1>\delta_x>0$ such that
\begin{equation}\label{dcont}
	- \int_{|x-t| < \delta_x} \log|x-t| \, d\mu_w(t) =  \int_{K_w} f_{\delta_x} (t;x ) \, d\mu_w(t)  - \int_{K_w} \log |x-t|  \, d\mu_w(t)  \leq \varepsilon/4.
\end{equation}
Furthermore, by the continuity argument in the previous paragraph, we can choose an $r_x <\delta_x$ so that for any $y\in K_w$ with $|y-x|<r_x$,
\begin{equation}\label{xcont}
	\left| \int_{|y-t| < \delta_x} \log|y-t| \, d\mu_w(t) -  \int_{|x-t| < \delta_x} \log|x-t| \, d\mu_w(t)  \right|  \leq \varepsilon/4.
\end{equation}
Again by compactness, we can cover $ K_w$ by some finite set $\{ B(y_i, r_{y_i}) \}_{i=1}^M$.  Moreover, there exists a $\delta>0$ such that for any $x\in K_w$, $B(x,\delta)\subset B(y_i, r_{y_i})$ for some $i=1,\ldots,M$. This will be the chosen $\delta$. Indeed, from~\eqref{dcont} and~\eqref{xcont}, and by $\delta<r_{y_i}<\delta_{y_i}$, 
\begin{align}
	- \int_{|x-t| < \delta} \log|x-t| \, d\mu_w(t) & \leq - \int_{|x-t| < \delta_{y_i}} \log|x-t| \, d\mu_w(t) \notag\\
	&\leq - \int_{|y_i-t| < \delta_{y_i}} \log|y_i-t| \, d\mu_w(t) + \varepsilon/4\leq \varepsilon/2. \label{Part2} 
\end{align}

Finally, we deal with the remaining integrals in~\eqref{WTS}. For any $x$, $\log|x-t|$ is continuous in the set $|x-t| > \delta$. 
The fact that $\mu_{n,k} \rightarrow \mu_w$ weak$^*$  implies by definition that there exists an $N_1\in\mathbb{N}$, such that
\begin{equation*}
	\int_{|x-t|\geq\delta} \log|x- t|\, d\mu_{n,k}(t) \leq \int_{|x-t|\geq\delta} \log|x - t|\,d\mu_w(t) + \varepsilon/4,\;\; \text{ if }\;\; n>N_1.
\end{equation*}
Moreover, for any non-negative integers $k_1 \neq k_2$, we find for some $C>0$,
\begin{align*}
	& \Big| \int_{|x-t|\geq\delta} \log|x- t|\, d\mu_{n,k_1}(t) - \int_{|x-t|\geq\delta} \log|x- t|\, d\mu_{n,k_2}(t)   \Big| \\ = &\Bigg| \frac1{\nn} \sum_{{j \neq k_1\atop |x-x_{n,j}| \geq \delta}} \log |x-x_{n,j}| - \frac1{\nn} \sum_{{j \neq k_2 \atop |x-x_{n,j}| \geq \delta}} \log |x-x_{n,j}| \Bigg| 
		 \leq  \frac1{\nn} \left| \log \left( \frac{\textrm{diam}(K_w)}{\delta} \right) \right|.
\end{align*}
The right-hand side is small as $n\rightarrow\infty$, so we can choose $N_2>N_1$ such that for $n>N_2$, and $0\leq k_1,k_2 \leq n$, 
$$\left| \int_{|x-t|\geq\delta} \log|x- t|\, d\mu_{n,k_1}(t) - \int_{|x-t|\geq\delta} \log|x- t|\, d\mu_{n,k_2}(t)   \right|<\varepsilon/4.$$ 
This implies that for $\nn>N_2$ and $0\leq\kk\leq\nn$, 
\begin{equation} \label{Part3}
	\int_{|x-t|\geq\delta} \log|x- t|\, d\mu_{n,k}(t) \leq \int_{|x-t|\geq\delta} \log|x - t|\,d\mu_w(t) + \varepsilon/2.
\end{equation}
Furthermore, by standard arguments from the compactness of the set $ K_w$, we can also choose $N > \max\{N_1,N_2\}$ to be independent of $x$. See [Taylor, Lemma 2.4.12].

Combining~\eqref{Part1},~\eqref{Part2}, and~\eqref{Part3} with~\eqref{WTS} yields the desired result.
\end{proof}

\subsection{Proof of Lemma~\ref{lem:P1}}\label{ssec:P1}
\begin{proof}
	Let $\varepsilon>0$ be given, and $K_w = \textrm{Supp}(\mu_w)$ as above. To prove the lemma, it will be enough to show that
	\begin{equation*}
		\Bigg| \log\Bigg( w(x_{\nn,\kk})^n \prod_{|x_{\nn,\kk} - x_{\nn,\jj}| \geq \delta} |x_{\nn,\kk} - x_{\nn,\jj}|  \Bigg)^{1/n} - (-F_w)\Bigg| < \varepsilon.
	\end{equation*}
	First, notice that
	\begin{equation*}
		\log\Bigg(  \prod_{|x_{\nn,\kk} - x_{\nn,\jj}| \geq \delta} |x_{\nn,\kk} - x_{\nn,\jj}| \Bigg)^{1/n} =  \int_{|t - x_{\nn,\kk}|\geq\delta} \log\left| t - x_{\nn,\kk} \right|\, d\mu_{\nn,\kk}(t),
	\end{equation*}
	and of course
	\begin{equation*}
		\log\left( w(x_{\nn,\kk})^n \right)^{1/n} = - Q(x_{\nn,\kk}).
	\end{equation*}
	Furthermore, we have already seen from~\eqref{UQF} that
	\begin{equation}
		U^{\mu_w}(x) +Q(x) = F_w, \quad \forall x\in K_w \subset\mathbb{R}.
	\end{equation} 
	Thus, we estimate
	\begin{align*}
		& \Bigg| \log\Bigg( w(x_{\nn,\kk})^n \prod_{|x_{\nn,\kk} - x_{\nn,\jj}| \geq \delta} |x_{\nn,\kk} - x_{\nn,\jj}| \Bigg)^{1/n} - (-F_w) \Bigg|  \\
			= & \left| \log\Bigg( w(x_{\nn,\kk})^n \prod_{|x_{\nn,\kk} - x_{\nn,\jj}| \geq \delta} |x_{\nn,\kk} - x_{\nn,\jj}| \Bigg)^{1/n} + U^{\mu_w}(x_{\nn,\kk}) + Q(x_{\nn,\kk}) \right|  \\
			\leq &\underbrace{\left| \int_{|t - x_{\nn,\kk}|\geq\delta} \log\left| t - x_{\nn,\kk} \right|\, d\mu_{\nn,\kk}(t) - \int_{|t - x_{\nn,\kk}|\geq\delta} \log\left| t - x_{\nn,\kk} \right|\, d\mu_w(t) \right|}_{A}\\
			&\qquad \qquad + \underbrace{\left| \int_{|t - x_{\nn,\kk}|<\delta} \log\left| t - x_{\nn,\kk} \right|\, d\mu_w(t) \right|}_{B} + \left| -Q(x_{\nn,\kk}) + Q(x_{\nn,\kk}) \right|.
	\end{align*}
	The last term is equal to zero, so it is left to show that there exists a $\delta>0$ and $N\in\mathbb{N}$ independent of $\nn$ and $\kk$ such that $A < \varepsilon/2$ and $B < \varepsilon/2$. The proof for the quantity A is shown in the proof of Theorem~\ref{lem:numer}, and the proof for B follows essentially from the proof of~\cite[Theorem 2.4.6]{taylor2008}, so we forgo the details here.
\end{proof}

\end{document}